\documentclass{scrartcl}
\usepackage[margin=2cm,bottom=1cm,includefoot]{geometry}
\usepackage{amsmath,amssymb,amsthm}
\usepackage{bbm}
\usepackage{mathrsfs}
\usepackage{enumerate}
\usepackage{graphicx}
\usepackage{hyperref}
\usepackage[all]{hypcap}
\usepackage[utf8]{inputenc}
\usepackage{float}
\usepackage{multicol}

\parindent 0pt

\newtheorem{thm}{Theorem}
\newtheorem{lem}[thm]{Lemma}
\newtheorem{prop}[thm]{Proposition}
\newtheorem{cor}[thm]{Corollary}

\let\BFseries\bfseries\def\bfseries{\BFseries\mathversion{bold}}

\newcommand{\N}{\mathbb{N}}
\newcommand{\R}{\mathbb{R}}
\newcommand{\E}{\mathbb{E}}
\newcommand{\p}{\mathbb{P}}
\newcommand{\FF}{\mathcal{F}}
\newcommand{\CC}{\mathcal{C}}
\newcommand{\BB}{\mathcal{B}}
\newcommand{\1}{\mathbbm{1}}
\newcommand{\eps}{\varepsilon}
\DeclareMathOperator{\e}{e}
\newcommand{\D}{\mathrm{d}}
\newcommand{\op}{\operatorname}
\newcommand{\y}{\|}
\newcommand{\lne}{<}
\newcommand{\gne}{>}
\newcommand{\abs}{|}

\begin{document}
\title{Brownian Motion Conditioned to Spend Limited Time Below a Barrier}
\author{Frank Aurzada\footnote{Technical University of Darmstadt, Department of Mathematics
\newline E-mail: aurzada@mathematik.tu-darmstadt.de, schickentanz@mathematik.tu-darmstadt.de
}\and Dominic T. Schickentanz\footnotemark[1]}
\date{\today}
\maketitle
\begin{abstract}
We condition a Brownian motion with arbitrary starting point $y \in \R$ on spending at most~$1$~time unit below~$0$ and provide an explicit description of the resulting process. In particular, we provide explicit formulas for the distributions of its last zero~$g=g^y$ and of its occupation time~$\Gamma=\Gamma^y$ below~$0$ as functions of $y$. This generalizes Theorem~4 of \cite{BB11}, which covers the special case $y=0$. Additionally, we study the behavior of the distributions of~$g^y$~and~$\Gamma^y$, respectively, for $y \to \pm\infty$.
\end{abstract}
\section{Introduction}
Conditioning stochastic processes on avoiding certain sets is a classical problem in probability theory. In 1957, Doob proved that a Brownian motion starting in $y \gne 0$ which is conditioned to avoid the negative half-line is nothing but a three-dimensional Bessel process starting in~$y$ (see \cite{Doo57}).
A more modern presentation of this result can be found in \cite{Pit75}.
Proceeding from Doob's work, similar problems have been considered for more complicated processes and more complicated or time-dependent sets to be avoided. Many examples are referred to in the introduction of \cite{Bar20}.
\\
In the present paper, we advance in a different direction: We allow a Brownian motion with an arbitrary starting point to spend limited time in the negative half-line. More precisely, fix $y \in \R$ and let $B=(B_t)_{t \ge 0}$ be a Brownian motion starting in~$y$. For each $T \ge 0$, let $$\Gamma_T:= \int_0^T \1_{\{B_s \lne 0\}} \D s$$ be the time $B$ spends below~$0$ until time $T$. In Theorem~\ref{thm1}, we will show that $$\p(B \in \cdot\,\, \abs\, \Gamma_T \le 1)$$ convergences weakly in $\CC([0,\infty))$ for $T \to \infty$.
The limiting process can informally be described as follows: Up to a random time $g$, the distribution of which we will determine explicitly, the limiting process is a conditioned Brownian bridge starting in~$y$ and ending in~$0$. Afterwards, it is a three-dimensional Bessel process. For $y \gne 0$, the event $\{g=0\}$ occurs with positive probability. If it occurs, the Bessel process starts immediately so that the limiting process stays positive all the time. A rigorous construction is given in Section~\ref{Sec2}.
\\
Even though we allow the limiting process to spend a total of~$1$~time unit below~$0$, its actual total occupation time $\Gamma$ below~$0$ is strictly smaller than~$1$ almost surely. In Theorem~\ref{thm2}, we will explicitly determine the distribution of $\Gamma$.
\\
Our Theorems~\ref{thm1}~and~\ref{thm2} generalize Theorem~4 of \cite{BB11}, which covers the special case $y=0$. The main upshot of the present paper is the complete understanding of the limiting process for general $y \in \R$. In particular, we describe the distributions of the last zero $g$ and the occupation time $\Gamma$ below~$0$ of the limiting process as explicit functions of~$y$.
\\
Clearly, the problem we consider is equivalent to conditioning a standard Brownian motion on spending at most $1$~time unit below the barrier $-y$, which explains the title of this paper. In view of the scaling property of Brownian motion, it is straightforward to replace the single time unit allowed to spend below the barrier by any other amount $c \gne 0$ of time.
\\
The phenomenon that the condition is not exhausted completely can be seen as an instance of entropic repulsion and occurs frequently when conditioning stochastic processes on negligible events. Examples related to the present work include Brownian motion with restricted local time in~$0$ (see \cite{RVY06,BB11,KS16}), with uniformly bounded local time in every point (see \cite{BB10}), with bounded excursion lengths (see \cite{RVY09}) and, of course, conditioned to stay positive.
\\
Without relying on the explicit distributions, Proposition \ref{gGamma} will provide an identity connecting the distributions of $g$, $\Gamma$ and the first entrance time of the limiting process to the negative half-line.
\\
Finally, we will take a closer look at the behavior of the distributions of~$g^y$~and~$\Gamma^y$ as functions of~$y$. For $y \to -\infty$, the transformed occupation time $y^2(1-\Gamma^y)$ is approximately exponentially distributed while the weak limit of $y^2(1-g^y)$ has some other explicit distribution (see part~(b) of Theorem~\ref{thm3}). In particular, $\Gamma^y$ and~$g^y$ both converge weakly to~$1$ for $y \to -\infty$. For $y \gne 0$,
we have to take into account that the limiting process may stay positive permanently.
Conditioned on spending time below~$0$ at all, the distribution of $\Gamma^y$ is independent of~$y$ for $y \ge 0$ while the conditional distribution of~$\frac{g^y}{y^2}$ converges weakly to an inverse chi-squared distribution for $y \to \infty$ (see part~(a) of Theorem~\ref{thm3}). In particular, conditional on the existence of a zero, the last zero diverges weakly to~$\infty$ for $y \to \infty$.
\\
The outline of this paper is as follows. In Section~\ref{Sec2}, we will formulate the main results rigorously.  Section~\ref{Sec3} is concerned with the distribution of $g$. In particular, we check that $g$ is well-defined which, unlike in the special case $y=0$ covered in \cite{BB11}, is non-trivial. The subsequent two sections are devoted to the proof of Theorem~\ref{thm1}. The key part will be Proposition~\ref{gConv}: It states that the conditional distribution of $$g_T:= \max\{s \in [0,T]: B_s=0\},$$ the last zero of $B$ before time $T$, converges to the distribution of $g$, the last zero of the limiting process, in total variation for $T \to \infty$. In Section~\ref{Sec6}, we will finally prove the remaining results.

\section{Main Results}
\label{Sec2}
In the introduction, we gave an overview of the main results and an informal description of the limiting process. Before we make this rigorous, we introduce an auxiliary notation: Let
\begin{equation}
\label{qDef}
q(t,u):=q^y(t,u):= \p\left(\int_0^t \1_{\{b_s' \lne 0\}} \D s\le u\right), \quad t,u \ge 0,
\end{equation}
be the probability that a Brownian bridge $(b_s')_{s \in [0,t]}$ of length~$t$ with $b_0'=y$ and $b_t'=0$ spends at most~$u$~time units below~$0$. Explicit formulas for $q$ are given in \eqref{q=1}, \eqref{qFormEqn} and \eqref{qFormPec}.
\\
Now we are ready to introduce the limiting process~$X$ rigorously:
\begin{enumerate}
\item Depending on the sign of~$y$, let~$g=g^y$ be a non-negative random variable with
\begin{align}
\label{gForm}
\p(g \le x) =
\begin{cases}
\1_{\{x \le 1\}} \dfrac{\sqrt{x}}{2} + \1_{\{x \gne 1\}} \left(1-\dfrac{1}{2 \sqrt{x}}\right),& \quad x \ge 0,\ y=0,\bigskip\\
\dfrac{\int_0^x q^y(t,1) \frac{1}{\sqrt{t}}\e^{-\frac{y^2}{2t}} \D t}{2\int_0^1 \frac{1}{\sqrt{t}}\e^{-\frac{y^2}{2t}} \D t},& \quad x \ge 0,\ y\lne 0,\bigskip\\
\dfrac{2\sqrt{2\pi}y+\int_0^x q^y(t,1) \frac{1}{\sqrt{t}}\e^{-\frac{y^2}{2t}} \D t}{2\sqrt{2\pi}y+4},& \quad x \ge 0,\ y\gne 0.
\end{cases}
\end{align}
We will see in Corollary~\ref{gFinite} that $\p(g \lne \infty)=1$ holds in all three cases. A plot of the distribution function of~$g$ for different values of~$y$ can be found in Figure \ref{Fig1}.
\item Let $b=(b_t)_{t \in [0,1]}$ be a process which, if restricted to $\{g=x\}$ for $x \ge 0$, is a standard Brownian bridge conditioned on $$\int_0^{x} \1_{\left\{\sqrt{x} b_{\frac{s}{x}}+y-\frac{s}{x}y \lne 0\right\}} \D s \le 1.$$
\item Let $Z=(Z_t)_{t \ge 0}$ be a three-dimensional Bessel process starting in~$0$, independent of $(g,b)$.
\item For $y \gne 0$, let $Y=(Y_t)_{t \ge 0}$ be a three-dimensional Bessel process starting in~$y$, independent of $(g,b,Z)$.
\item We define $$X:=(X_t)_{t \ge 0}:= \left(\1_{\{g \gne 0,\, t \lne g\}} \left(\sqrt{g}b_{\frac{t}{g}}+y-\frac{t}{g}y\right) + \1_{\{g \gne 0,\, t \ge g\}} Z_{t-g}+ \1_{\{g=0, y \gne 0\}}Y_t\right)_{t \ge 0}.$$
\end{enumerate}
Endowing the space $\CC([0,\infty))$ with the topology of locally uniform convergence and the corresponding Borel $\sigma$-algebra, we obtain the announced convergence result:
\begin{thm}
\label{thm1}
For $T \to \infty$, the probability measures $\p(B \in \cdot \,\, \abs\, \Gamma_T \le 1)$ converge weakly in $\CC([0,\infty))$ to the distribution of~$X$.
\end{thm}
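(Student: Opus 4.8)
The plan is to reduce the assertion to finitely many time windows and then to decompose the conditioned path at its last zero. Since the topology of locally uniform convergence on $\CC([0,\infty))$ is the initial topology with respect to the restriction maps $r_S\colon\CC([0,\infty))\to\CC([0,S])$, and since tightness on $\CC([0,\infty))$ reduces to tightness of all restrictions, it is a standard fact that $\p(B\in\cdot\mid\Gamma_T\le1)$ converges weakly on $\CC([0,\infty))$ if and only if $(r_S)_*\,\p(B\in\cdot\mid\Gamma_T\le1)$ converges weakly on $\CC([0,S])$ for every $S>0$. So I fix $S>0$, take $T>S$ from now on, and aim to show $\E[F(B|_{[0,S]})\mid\Gamma_T\le1]\to\E[F(X|_{[0,S]})]$ for every bounded continuous $F$ on $\CC([0,S])$.

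The first step is to disintegrate over $g_T$. Since $g_T$ has a density on $(0,T)$ and, when $y>0$, an atom at $0$, we may write $\E[F(B|_{[0,S]})\mid\Gamma_T\le1]=\int\big(\int F\,\D\nu_{T,t}\big)\,\mu_T(\D t)$, where $\mu_T:=\p(g_T\in\cdot\mid\Gamma_T\le1)$ and $\nu_{T,t}:=\p(B|_{[0,S]}\in\cdot\mid\Gamma_T\le1,\,g_T=t)$. By Proposition~\ref{gConv}, $\mu_T\to\mu:=\p(g\in\cdot)$ in total variation; using this together with $\|F\|_\infty$ as a uniform bound on $\int F\,\D\nu_{T,t}$ and the tightness of the fixed measure $\mu$, the integral against $\mu_T$ may be replaced (off a large compact set of $t$'s) by the integral against $\mu$ at an asymptotically vanishing cost. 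It therefore suffices to prove that $\int F\,\D\nu_{T,t}\to\int F\,\D\nu_t$ as $T\to\infty$ for $\mu$-almost every $t$, where $\nu_t:=\p(X|_{[0,S]}\in\cdot\mid g=t)$, and then to apply dominated convergence with the finite measure $\mu$.

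The core is the last-zero decomposition of the conditioned path. On $\{g_T=t\}$ with $0<t<T-1$, the pre-$t$ part $(B_s)_{s\le t}$ is a Brownian bridge from $y$ to $0$ of length $t$, independent of the post-$t$ part $(B_{t+s})_{s\le T-t}$, which is a fair random sign times a Brownian meander of length $T-t$ --- the classical excursion-theoretic description of the piece straddled between the last zero before $T$ and $T$ itself, made rigorous via the density of $g_T$. After this split, $\{\Gamma_T\le1\}$ becomes $\{\Gamma_t\le1\}$ on the bridge if the sign is positive and the impossible event $\{\Gamma_t\le1-(T-t)\}$ if it is negative, so conditionally the sign is positive, the bridge is conditioned on spending at most one time unit below $0$, and the meander stays independent of the bridge. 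By the construction of $X$, this conditioned bridge is exactly the law of $(X_s)_{s\le g}$ given $\{g=t\}$; and since a Brownian meander of length $\ell$, restricted to a fixed window, converges in law to the three-dimensional Bessel process $Z$ started at $0$ (equivalently, by Doob's conditioning), letting $T\to\infty$ gives $\nu_{T,t}\Rightarrow\nu_t$. The atom at $0$ (when $y>0$) is handled separately: $\{g_T=0\}=\{B>0\text{ on }(0,T]\}$, on which $\Gamma_T=0\le1$ automatically, and by Doob's theorem $\p(B|_{[0,S]}\in\cdot\mid B>0\text{ on }(0,T])$ converges to the law of the three-dimensional Bessel process $Y$ started at $y$, i.e.\ to $\nu_0$. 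Inserting $\int F\,\D\nu_{T,t}\to\int F\,\D\nu_t$ into the disintegration and applying dominated convergence yields $\E[F(B|_{[0,S]})\mid\Gamma_T\le1]\to\int F\,\D\nu=\E[F(X|_{[0,S]})]$, completing the argument over the window $[0,S]$ and hence the proof.

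The main obstacle is the rigorous handling of this decomposition together with the interchange of limits: because $\{g_T=t\}$ is a null event, the pre/post independence and the meander description must be set up through the density of $g_T$; one must verify carefully that additionally imposing $\{\Gamma_T\le1\}$ only forces the sign and conditions the bridge without spoiling independence; and one must control the exceptional range $t\in[T-1,T]$, where a negative final excursion is still possible, as well as the uniformity needed to pass to the limit under the $\mu$-integral. The convergence of Brownian meanders to the three-dimensional Bessel process and Doob's theorem for the conditioned-to-stay-positive part are classical, so once Proposition~\ref{gConv} is available, these technical points are the only genuine work.
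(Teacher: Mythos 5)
Your proposal is correct and follows essentially the same route as the paper: reduce to finite windows $[0,t_0]$, disintegrate over the last zero $g_T$, use Proposition~\ref{gConv} for total-variation convergence of $\p(g_T\in\cdot\mid\Gamma_T\le1)$, decompose the path at $g_T$ into a conditioned bridge and a meander (which converges to the Bessel process $Z$, the paper's Proposition~\ref{AZconv}), and treat the atom at $0$ via Doob's conditioning. The only cosmetic difference is that the paper packages the ``total variation plus pointwise weak convergence of kernels implies weak convergence of mixtures'' step as a separate Proposition~\ref{IntConv}, whereas you argue it inline via dominated convergence against the fixed $\mu$.
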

Now let $$\Gamma:=\Gamma^y:= \int_0^\infty \1_{\{X_s \lne 0\}} \D s = \int_0^g \1_{\{X_s \lne 0\}} \D s$$ be the total time $X$ spends below~$0$. By construction of~$X$, we have $\Gamma \le 1$. The distribution function of $\Gamma$ has the following explicit form:

\begin{thm}
\label{thm2}
We have
\begin{align}
\label{GammaForm}
\p(\Gamma \le u) =
\begin{cases}
\dfrac{\int_0^u \frac{1}{\sqrt{t}} \e^{-\frac{y^2}{2t}} \D t}{\int_0^1 \frac{1}{\sqrt{t}} \e^{-\frac{y^2}{2t}} \D t}, \quad& u \in [0,1],\ y \le 0,\bigskip\\
\dfrac{\sqrt{2 \pi} y + 2 \sqrt{u}}{\sqrt{2 \pi} y + 2}, \quad& u \in [0,1],\ y \ge 0.
\end{cases}
\end{align}
\end{thm}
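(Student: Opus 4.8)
The plan is to condition on the last zero $g$, insert its distribution \eqref{gForm}, and reduce the remaining integral to the special case $y=0$ by means of the strong Markov property.

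\emph{Conditioning on $g$.} Since $X$ coincides with a strictly positive Bessel process on $[g,\infty)$, we have $\Gamma=\int_0^g \1_{\{X_s\lne0\}}\,\D s$, and $\Gamma=0$ on $\{g=0\}$. On $\{g=x\}$ with $x\gne0$, the construction of $X$ makes $(X_t)_{t\in[0,x]}=\bigl(\sqrt{x}\,b_{t/x}+y-\tfrac{t}{x}y\bigr)_{t\in[0,x]}$ a Brownian bridge of length $x$ from $y$ to $0$ conditioned exactly on the event that its occupation time below $0$ is at most $1$, and on that event $\Gamma$ equals precisely this occupation time; hence, with the notation of \eqref{qDef}, $\p(\Gamma\le u\mid g=x)=q^y(x,u)/q^y(x,1)$ for $x\gne0$ and $u\in[0,1]$ (here $q^y(x,1)\gne0$ for every $x\gne0$, and $q^y(x,1)=1$ if $x\le1$), so that
\[
\p(\Gamma\le u)=\p(g=0)+\int_{(0,\infty)} \frac{q^y(x,u)}{q^y(x,1)}\,\p(g\in\D x).
\]
Differentiating the distribution functions in \eqref{gForm} shows that, restricted to $(0,\infty)$, the law of $g$ has density $x\mapsto c_y^{-1}\,q^y(x,1)\,x^{-1/2}\e^{-\frac{y^2}{2x}}$ with $c_y:=2\int_0^1 t^{-1/2}\e^{-\frac{y^2}{2t}}\,\D t$ for $y\le0$ and $c_y:=2\sqrt{2\pi}\,y+4$ for $y\gne0$ (for $y=0$ one uses in addition $q^0(x,1)=\min(1/x,1)$, i.e.\ Lévy's arcsine law for the bridge), while $\p(g=0)=0$ for $y\le0$ and $\p(g=0)=2\sqrt{2\pi}\,y/c_y$ for $y\gne0$. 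The factor $q^y(x,1)$ therefore cancels, and we are left with
\[
\p(\Gamma\le u)=\p(g=0)+c_y^{-1}\,J^y(u),\qquad J^y(u):=\int_0^\infty q^y(x,u)\,x^{-1/2}\e^{-\frac{y^2}{2x}}\,\D x;
\]
it then suffices to prove $J^y(u)=4\sqrt{u}$ for $y\ge0$ and $J^y(u)=2\int_0^u t^{-1/2}\e^{-\frac{y^2}{2t}}\,\D t$ for $y\le0$.

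\emph{Evaluating $J^y$.} Let $B$ be a Brownian motion started in $y$ with semimartingale local time $L^0$ at $0$, put $\Gamma_x^B:=\int_0^x \1_{\{B_s\lne0\}}\,\D s$ and $p_x(a,b):=(2\pi x)^{-1/2}\e^{-(a-b)^2/(2x)}$. Because $\E^y[\D L_x^0]=p_x(y,0)\,\D x$ and, conditionally on $\{B_x=0\}$, the path $(B_s)_{s\le x}$ is a Brownian bridge of length $x$ from $y$ to $0$, one has $\E^y\bigl[\1_{\{\Gamma_x^B\le u\}}\,\D L_x^0\bigr]=q^y(x,u)\,p_x(y,0)\,\D x$, and integrating in $x$ gives $J^y(u)=\sqrt{2\pi}\,\E^y\bigl[\int_0^\infty \1_{\{\Gamma_x^B\le u\}}\,\D L_x^0\bigr]$. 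Decomposing at $T_0:=\inf\{x\ge0:B_x=0\}$ (a.s.\ finite): no local time is accumulated on $[0,T_0]$, and $\Gamma_{T_0}^B=0$ for $y\ge0$, whereas $\Gamma_{T_0}^B=T_0$ for $y\lne0$. The strong Markov property therefore yields $J^y(u)=J^0(u)$ for $y\ge0$, and $J^y(u)=\E^y\bigl[\1_{\{T_0\lne u\}}J^0(u-T_0)\bigr]=\int_0^u J^0(u-t)\,|y|(2\pi t^3)^{-1/2}\e^{-\frac{y^2}{2t}}\,\D t$ for $y\lne0$, using the density $t\mapsto|y|(2\pi t^3)^{-1/2}\e^{-\frac{y^2}{2t}}$ of $T_0$ under $\p^y$. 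Everything is thus reduced to the case $y=0$.

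\emph{The case $y=0$ and the conclusion.} For $y=0$, the identity above reads $\p(\Gamma^0\le u)=\tfrac14 J^0(u)$, so Theorem~4 of \cite{BB11}, which states $\p(\Gamma^0\le u)=\sqrt{u}$, gives $J^0(u)=4\sqrt{u}$; the $\sqrt{u}$-behaviour is also visible from Brownian excursion theory, $\int_0^\infty \1_{\{\Gamma_x^B\le u\}}\,\D L_x^0$ being the first-passage level above $u$ of the stable-$\tfrac12$ subordinator obtained by indexing the negative-excursion occupation time by local time. This settles $y\ge0$. For $y\lne0$, substituting $J^0(v)=4\sqrt{v}$ leaves us to verify
\[
\frac{4|y|}{\sqrt{2\pi}}\int_0^u \sqrt{u-t}\,t^{-3/2}\e^{-\frac{y^2}{2t}}\,\D t=2\int_0^u t^{-1/2}\e^{-\frac{y^2}{2t}}\,\D t,
\]
which follows by differentiating in $u$ and observing that both sides vanish as $u\downarrow0$: the derivative identity $\frac{|y|}{\sqrt{2\pi}}\int_0^u t^{-3/2}(u-t)^{-1/2}\e^{-\frac{y^2}{2t}}\,\D t=u^{-1/2}\e^{-\frac{y^2}{2u}}$ is exactly $\sqrt{2\pi}$ times the first-passage decomposition $p_u(0,y)=\int_0^u |y|(2\pi t^3)^{-1/2}\e^{-\frac{y^2}{2t}}\,p_{u-t}(0,0)\,\D t$ of the Gaussian density. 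Plugging the resulting values of $J^y$, $c_y$ and $\p(g=0)$ into $\p(\Gamma\le u)=\p(g=0)+c_y^{-1}J^y(u)$ gives \eqref{GammaForm}. The main obstacle is to make rigorous the local-time/bridge disintegration $\E^y[\1_A\,\D L_x^0]=\p^{y\to0}_x(A)\,p_x(y,0)\,\D x$ (for $A$ measurable w.r.t.\ $(B_s)_{s\le x}$, with $\p^{y\to0}_x$ the law of the length-$x$ bridge from $y$ to $0$) and to keep the normalizing constants straight, together with the bookkeeping that conditioning on $\{g=x\}$ in the definition of $X$ really yields that conditioned bridge; after that the remaining step is the elementary identity displayed above. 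One could also dispense with the local-time argument and compute $J^y(u)$ directly from the explicit formulas \eqref{q=1}, \eqref{qFormEqn} and \eqref{qFormPec} for $q^y$, at the cost of a longer but routine calculation.
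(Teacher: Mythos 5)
Your proof is correct, and while it begins with the same conditioning-on-$g$ step as the paper (yielding $\p(\Gamma\le u)=\p(g=0)+c_y^{-1}J^y(u)$ after the cancellation of $q^y(x,1)$, which is exactly a repackaging of the paper's Lemma~\ref{IntLem}), the way you establish the $J^y$ identities is genuinely different. The paper differentiates $\int_u^\infty q(t,u)\,t^{-1/2}\e^{-y^2/2t}\,\D t$ in $u$, invoking Leibniz's rule, the explicit formula~\eqref{qFormEqn} for $q$, and the bridge hitting-time density from Lemma~\ref{tauForm}; the computation is carried out separately and rather laboriously for $y\lne 0$ and $y\gne 0$, and the $y=0$ case is reproved by an explicit calculation following \cite{BB11}. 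You instead interpret $J^y(u)$ as $\sqrt{2\pi}\,\E^y\big[\int_0^\infty\1_{\{\Gamma_x^B\le u\}}\,\D L_x^0\big]$ via the local-time/bridge disintegration $\E^y[\1_A\,\D L_x^0]=\p^{y\to0}_x(A)\,p_x(y,0)\,\D x$, and then use the strong Markov property at $T_0$ to reduce everything to the $y=0$ base case, which you obtain from Theorem~4 of \cite{BB11} (a legitimate move, since the present theorem explicitly claims to generalize that result). For $y\ge 0$ this is markedly cleaner: $J^y=J^0=4\sqrt u$ with no further computation. For $y\lne 0$ a convolution identity remains, which you settle by recognizing its $u$-derivative as $\sqrt{2\pi}$ times the first-passage decomposition of the heat kernel $p_u(0,y)=\int_0^u |y|(2\pi t^3)^{-1/2}\e^{-y^2/2t}p_{u-t}(0,0)\,\D t$ --- a shorter and more transparent step than the paper's corresponding computation, though still a differentiation argument in the end. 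What your route buys is conceptual clarity and a probabilistic explanation for why the answer has the form it does (especially the $y$-independence of $J^y$ for $y\ge 0$); what it costs is reliance on the local-time/Revuz-type disintegration, which, as you rightly flag, is the one step requiring a careful reference or proof, and on \cite{BB11} for the base case rather than being self-contained. Both are sound; the choice is a matter of taste.
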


A plot of this distribution function for different values of~$y$ can be found in Figure \ref{Fig1}. Noting
\begin{equation}
\label{q=1}
q(t,u)=1, \quad u \ge t \ge 0,
\end{equation}
a comparison of~\eqref{gForm}~and~\eqref{GammaForm} yields
\begin{equation}
\label{gGammaForm}
2\p(g \le u) = \p(\Gamma \le u), \quad u \in [0,1],\ y\le 0.
\end{equation}
This identity is not a mere coincidence. Let $$\tau:= \inf\{t \ge 0: X_t \le 0\}$$ be the first entrance time of the limiting process to the negative half-line. Noting $g \gne 0$ and $\tau = 0$ for $y \le 0$, Proposition \ref{gGamma} below provides a generalization of~\eqref{gGammaForm} which is valid for all $y \in \R$. We will prove this result, which is a consequence of the arcsine laws and the strong Markov property, without relying on the explicit formulas~\eqref{gForm}~and~\eqref{GammaForm}.
\begin{prop}
\label{gGamma}
We have $$2 \p(g \in (0,u]) = \p( \tau + \Gamma \leq u), \quad u \in [0,1].$$
\end{prop}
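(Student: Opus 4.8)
\emph{Proof strategy.} The proof uses two facts: the arcsine law for the occupation time of a Brownian bridge — the time a Brownian bridge of length~$\ell$ from~$0$ to~$0$ spends below~$0$ is uniform on $[0,\ell]$, so that $q^0(\ell,v)=\min(1,v/\ell)$ — and the strong Markov property. Write $X^{0,c}$ for the limiting process attached to a budget $c\gne0$ in place of the single time unit; by Theorem~\ref{thm1} and Brownian scaling this exists and satisfies $X^{0,c}\overset d=\sqrt c\,X^0_{\cdot/c}$, and we write $g^{0,c},\Gamma^{0,c}$ for its last zero and occupation time below~$0$, so $g^{0,c}\overset d=c\,g^0$ and $\Gamma^{0,c}\overset d=c\,\Gamma^0$. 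The plan is: first settle the base case $y=0$ by a direct computation, then reduce the general case to it by decomposing $X$ at its first hitting time of~$0$.

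\emph{Base case $y=0$.} Here $\tau=0$ and $g\gne0$ a.s., so the claim reads $2\,\p(g\le u)=\p(\Gamma\le u)$ for $u\in[0,1]$. By construction $g=g^0$ has density proportional to $q^0(\ell,1)\,\ell^{-1/2}$ on $(0,\infty)$, and conditionally on $g=\ell$ the occupation time~$\Gamma$ is the occupation time below~$0$ of a Brownian bridge of length~$\ell$ from~$0$ to~$0$ conditioned to be $\le1$, i.e., by the arcsine law, uniform on $[0,\min(\ell,1)]$. Splitting $\int_0^\infty\p(\Gamma\le u\mid g=\ell)\,\p(g\in\D\ell)$ over $\ell\le u$, $u\lne\ell\le1$ and $\ell\gne1$, inserting $q^0(\ell,1)=\min(1,1/\ell)$ and integrating elementarily, one obtains $\p(g\le u)=2c\sqrt u$ and $\p(\Gamma\le u)=4c\sqrt u$ with $c^{-1}=\int_0^\infty q^0(\ell,1)\ell^{-1/2}\,\D\ell$; the constant drops out and $2\,\p(g\le u)=\p(\Gamma\le u)$. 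By the scaling above, $2\,\p(g^{0,c}\le v)=\p(\Gamma^{0,c}\le v)$ for every $c\gne0$ and every $v\in[0,c]$ (both sides vanishing for $v\lne0$).

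\emph{Decomposition at the first zero.} Put $\sigma:=\inf\{t\ge0:X_t=0\}$ and $\Gamma_\sigma:=\int_0^\sigma\1_{\{X_r\lne0\}}\,\D r$. If $y\le0$ then $\sigma\lne\infty$ a.s., $\tau=0$, and $\Gamma_\sigma=\sigma\1_{\{y\lne0\}}$; if $y\gne0$ then $\{\sigma\lne\infty\}=\{g\gne0\}$, with $\tau=\sigma$ and $\Gamma_\sigma=0$ there, while on $\{\sigma=\infty\}=\{g=0\}$ the process stays positive and $\tau=\infty$. On $\{\sigma\lne\infty\}$ one has $X_\sigma=0$ and, by continuity, $X$ is of constant sign on $[0,\sigma)$, so $\Gamma_\sigma\in\{0,\sigma\}$ is determined by~$\sigma$ and the sign of~$y$. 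Conditioning on $g=x$ and applying the strong Markov property of the Brownian bridge from~$y$ to~$0$ of length~$x$ at the stopping time~$\sigma$ splits it, independently given~$\sigma$, into a path from~$y$ to~$0$ on $[0,\sigma]$ (of constant sign on $[0,\sigma)$) and a Brownian bridge from~$0$ to~$0$ of length $x-\sigma$ on $[\sigma,x]$; since $\Gamma_\sigma$ depends only on~$\sigma$, the event $\{\Gamma\le1\}$ reads $\{(\text{occupation below }0\text{ on }[\sigma,x])\le1-\Gamma_\sigma\}$ and acts only on the second piece. Using that under that bridge $\sigma$ has density $h_y(s)\,p_{x-s}(0,0)/p_x(y,0)$ on $(0,x)$ — with $p_t(y,0)=(2\pi t)^{-1/2}e^{-y^2/(2t)}$ and $h_y(s)=|y|(2\pi s^3)^{-1/2}e^{-y^2/(2s)}$ the first-hitting-time density of~$0$ by a Brownian motion from~$y$ — together with $x^{-1/2}e^{-y^2/(2x)}=\sqrt{2\pi}\,p_x(y,0)$ in the density~\eqref{gForm} of~$g$, the factor $q^y(x,1)$ cancels, and one finds that conditionally on $\sigma=s$ the pair $\bigl(g-\sigma,\,(X_{\sigma+t})_{t\in[0,g-\sigma]}\bigr)$ has exactly the joint law of the length and the initial bridge segment of $X^{0,1-\Gamma_\sigma}$. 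Appending in both pictures an independent three-dimensional Bessel process from~$0$ after the last zero, we conclude that, conditionally on $\sigma=s$, the shifted process $(X_{\sigma+t})_{t\ge0}$ has the law of $X^{0,1-\Gamma_\sigma}$.

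\emph{Conclusion.} Hence on $\{\sigma\lne\infty\}$ we have $g=\sigma+g^{0,1-\Gamma_\sigma}\gne0$ and (using $\tau=0$ for $y\le0$, $\tau=\sigma$ for $y\gne0$, and $\Gamma=\Gamma_\sigma+\Gamma^{0,1-\Gamma_\sigma}$) $\tau+\Gamma=\sigma+\Gamma^{0,1-\Gamma_\sigma}$, while on $\{\sigma=\infty\}$ neither $\{g\in(0,u]\}$ nor $\{\tau+\Gamma\le u\}$ occurs for $u\le1$. Conditioning on~$\sigma$ and applying the base case with budget $c=1-\Gamma_\sigma$ and argument $v=u-\sigma$ — which lies in $[0,c]$ when $\sigma\le u$ and is negative otherwise — gives, for $u\in[0,1]$,
\[
\p(\tau+\Gamma\le u)=\E\bigl[\1_{\{\sigma\lne\infty\}}\,\p\bigl(\Gamma^{0,1-\Gamma_\sigma}\le u-\sigma\mid\sigma\bigr)\bigr]=\E\bigl[\1_{\{\sigma\lne\infty\}}\,2\,\p\bigl(g^{0,1-\Gamma_\sigma}\le u-\sigma\mid\sigma\bigr)\bigr]=2\,\p(g\in(0,u]),
\]
which is the assertion. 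The delicate point is the decomposition step: one must check that $\{\Gamma\le1\}$ splits across~$\sigma$ as stated and carry out the bookkeeping by which the bridge's first-hitting-time density combines with~\eqref{gForm} so that $q^y(x,1)$ cancels. (Alternatively this step follows from Theorem~\ref{thm1} by applying the strong Markov property to~$B$ at its first hitting time of~$0$ and letting $T\to\infty$, but the direct computation is more self-contained.) The base-case computation, though routine, is what produces the factor~$2$.
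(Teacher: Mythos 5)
Your proof is correct, but it takes a genuinely different route from the paper's. The paper's argument never leaves the pre-limit picture: it applies the strong Markov property to $B$ at its first zero $\bar\tau$, writes $\bar B := B_{\bar\tau+\cdot}$ (a standard Brownian motion independent of $\bar\tau$), and then uses the \emph{two classical arcsine laws for Brownian motion} together with the symmetry of the sign of $\bar B_{T-t}$ to get the fixed-$T$ identity $2\,\p(\bar g_{T-t}\le u-t,\,\bar B_{T-t}\gne 0)=\p(\bar g_{T-t}\le u-t)=\p(\bar\Gamma_{T-t}\le u-t)$, from which it deduces $2\p(g_T\in(0,u]\mid\Gamma_T\le1)=\p(\1_{\{y\gne0\}}\bar\tau+\Gamma_T\le u\mid\Gamma_T\le1)$, and only at the very end passes to the limit via Proposition~\ref{gConv}. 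This is precisely why the paper can claim to avoid the explicit formulas~\eqref{gForm} and~\eqref{GammaForm}: the factor~$2$ comes from the sign-symmetry of $\bar B_T$, and the matching of laws comes from the equality in distribution of the last-zero and occupation-time arcsine variables for an \emph{unconditioned} Brownian motion. You instead work entirely on the limiting process $X$: you decompose $X$ at its first hitting time $\sigma$ of~$0$, verify that the post-$\sigma$ process is a copy of $X^{0,1-\Gamma_\sigma}$, and reduce to the base case $y=0$, which you settle by a direct computation that inserts the explicit density of $g^0$ and Lévy's arcsine law for the \emph{bridge} occupation time. Both routes hinge on the strong Markov property at the first zero, but yours applies it to the already-conditioned bridge inside the construction of~$X$, which is the step requiring the joint-density bookkeeping you flag; the paper's version applies it to the unconditioned Brownian motion where it is immediate. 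The trade-off: your argument makes the self-similarity of the family $X^{0,c}$ and the renewal structure at $\sigma$ explicit (a nice structural insight), at the cost of both a more delicate conditioned-process SMP and a base-case computation that does invoke~\eqref{gForm}; the paper's argument is cleaner in that the arcsine identity does all the work and no explicit density ever appears, but the renewal picture stays implicit until $T\to\infty$.

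Two small points worth noting when you flesh out the decomposition step: (i)~you should make the conditional joint-density computation explicit — that the factor $q^y(x,1)$ from the conditioning in the law of~$b$ cancels against the normalisation in~\eqref{gForm}, so that the conditional density of $g-\sigma$ given $\sigma=s$ is $\propto q^0(\ell,1-\Gamma_\sigma)\ell^{-1/2}$, which is indeed the density of $g^{0,1-\Gamma_\sigma}$; and (ii)~for $y\lne 0$ you silently use $\sigma\le 1$ (which follows from $\Gamma\le 1$ and $X\lne 0$ on $[0,\sigma)$) so that the budget $1-\Gamma_\sigma=1-\sigma$ is nonnegative — this is fine since $\p(\sigma=1)=0$, but it deserves a sentence.
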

Finally, we discuss the behavior of the distributions of~$g^y$~and~$\Gamma^y$, respectively, for $y \to \pm \infty$. According to~\eqref{gForm}~and~\eqref{GammaForm}, we have $$\p(g^y =0) = \p(\Gamma^y = 0) = \frac{\sqrt{2\pi} y}{\sqrt{2\pi} y +2}\gne 0, \quad y \gne 0.$$ In particular, $g^y$ and $\Gamma^y$ both converge weakly to~$0$ for $y \to \infty$.
Note that the events $\{g^y=0\}$ and $\{\Gamma^y=0\}$ correspond to the situation where the limiting process stays positive all the time.
Formula~\eqref{GammaForm} implies $$\p(\Gamma^y \le u \,\abs\, \Gamma^y \gne 0) = \sqrt{u} = \p(\Gamma^0 \le u), \quad u \in [0,1],\ y \ge 0.$$ Consequently, conditioned on spending time below~$0$ at all, the distribution of the occupation time~$\Gamma^y$ below~$0$ is given by the square of a uniform distribution on $[0,1]$ for each $y \ge 0$. In particular, this conditional distribution is independent of the starting point $y \ge 0$. The reason is as follows: After the limiting process $X^y$ (starting in $y\geq 0$) hits $0$ for the first time, it behaves in distribution like the limiting process $X^0$ (starting in $0$).
\\
The following theorem covers the behavior of $g^y$ for $y \to \infty$ conditioned on the existence of a zero as well as the behavior of $\Gamma^y$ and $g^y$ for $y \to -\infty$:
\begin{thm}
\label{thm3}
\begin{enumerate}[(a)]
\item For $y \to \infty$, the conditional distribution $\p\big(\frac{g^y}{y^2} \in \cdot \,\abs\, g^y \gne 0\big)$ converges weakly to an inverse chi-squared distribution with Lebesgue density $$\R \to [0,\infty), \quad s \mapsto \1_{\{s \gne 0\}} \frac{1}{\sqrt{2 \pi s^3}} \e^{-\frac{1}{2s}}.$$ 
In particular, $g^y$ conditional on $\{g^y \gne 0\}$ diverges in distribution to $\infty$ for $y \to \infty$.
\item For $y \to -\infty$, the random variable $y^2(1-\Gamma^y)$ converges weakly to an exponential distribution with parameter $\frac{1}{2}$ while $y^2(1-g^y)$ converges weakly to a random variable $g'$ with distribution function given by $$\p(g' \le u) =
\begin{cases}
\displaystyle\int_0^\infty \dfrac{2z}{\sqrt{2\pi(2z-u)}} \e^{-z} \D z, \quad & u \le 0,\bigskip\\
1- \dfrac{1}{2}\e^{-\frac{u}{2}}, \quad & u \ge 0.
\end{cases}$$
In particular, $\Gamma^y$ and $g^y$ both converge in distribution to $1$ for $y \to -\infty$.
\end{enumerate}
\end{thm}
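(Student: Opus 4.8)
The plan is to derive everything from the explicit formulas \eqref{gForm} and \eqref{GammaForm} together with the explicit expression for $q$ (cf.\ \eqref{qFormPec}), and then to carry out Laplace-type asymptotics; the only delicate input will be the behaviour of $q^y(t,1)$ in two scaling regimes. Throughout I write $a:=y^2$, let $\Phi$ denote the standard normal distribution function, and recall that for $y<0$ the variable $g^y$, and for every $y$ the variable $\Gamma^y$, have densities on $(0,1)$, so the distribution functions in question are continuous and it suffices to identify their pointwise limits.

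\emph{Part (a).} From \eqref{gForm} with $y>0$ one reads off $\p(g^y=0)=\sqrt{2\pi}y/(\sqrt{2\pi}y+2)$, hence $\p(g^y>0)=2/(\sqrt{2\pi}y+2)$ and
$$\p(g^y\le x\mid g^y>0)=\tfrac{1}{4}\int_0^x q^y(t,1)\,t^{-1/2}\e^{-y^2/(2t)}\,\D t.$$
The substitution $t=ry^2$ rewrites $\p(g^y/y^2\le s\mid g^y>0)$ as $\tfrac{1}{4}\int_0^s\bigl(y\,q^y(ry^2,1)\bigr)\,r^{-1/2}\e^{-1/(2r)}\,\D r$, so it is enough to prove $y\,q^y(ry^2,1)\to 4/(\sqrt{2\pi}\,r)$ for each $r>0$, plus a $y$-uniform integrable bound; dominated convergence then yields $\p(g^y/y^2\le s\mid g^y>0)\to\int_0^s(2\pi r^3)^{-1/2}\e^{-1/(2r)}\,\D r$, the announced inverse chi-squared distribution function. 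For the pointwise asymptotics I rescale the Brownian bridge from $y$ to $0$ of length $ry^2$ onto $[0,1]$: its occupation time below $0$ equals $ry^2\int_0^1\1_{\{W_v<-(1-v)/\sqrt r\}}\,\D v$ for a standard Brownian bridge $W$, so $q^y(ry^2,1)$ is the probability that $\int_0^1\1_{\{W_v<-(1-v)/\sqrt r\}}\,\D v\le (ry^2)^{-1}$. As $y\to\infty$ this forces that vanishingly small occupation time to concentrate near $v=1$, where $W$ behaves like a Brownian motion; a scaling argument identifies the relevant local quantity with $r$ times $L:=\int_0^\infty\1_{\{\beta_z>z\}}\,\D z$ for a standard Brownian motion $\beta$ (equivalently, the total positive occupation time of a Brownian motion with drift $-1$ started at $0$). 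Since the law of $L$ has density $\sim\sqrt{2/(\pi s)}$ as $s\downarrow 0$ (established below), $\p(L\le\delta)\sim 2\sqrt{2\delta/\pi}$, and therefore $q^y(ry^2,1)\sim 2\sqrt2/(\sqrt\pi\,r\,y)=4/(\sqrt{2\pi}\,r\,y)$, as required. The concluding assertion is immediate because $g^y/y^2$ has a proper, a.s.\ positive limit while $y^2\to\infty$.

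\emph{Part (b).} For $\Gamma^y$ I only need \eqref{GammaForm} with $y<0$: write $\p(y^2(1-\Gamma^y)\le v)=\p(\Gamma^y\ge 1-v/y^2)$ as the quotient of $\int_{1-v/a}^1 t^{-1/2}\e^{-a/(2t)}\,\D t$ and $\int_0^1 t^{-1/2}\e^{-a/(2t)}\,\D t$. By Laplace's method --- the exponent $-a/(2t)$ is maximised at $t=1$, where $\e^{-a/(2t)}\approx\e^{-a/2}\e^{-(1-t)a/2}$ --- numerator and denominator are asymptotically $\tfrac{2}{a}\e^{-a/2}$ times $(1-\e^{-v/2})$ and $1$ respectively, so the quotient tends to $1-\e^{-v/2}$, the distribution function of an $\mathrm{Exp}(\tfrac{1}{2})$ law. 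For $g^y$, write $\p(y^2(1-g^y)\le u)=1-\p(g^y\le 1-u/y^2)$ from \eqref{gForm} with $y<0$ and split according to the sign of $u$. If $u>0$ then the range of integration lies in $(0,1)$, where $q^y(t,1)=1$ by \eqref{q=1}, and the same Laplace estimate ($\int_0^{1-u/a}t^{-1/2}\e^{-a/(2t)}\,\D t\sim\tfrac{2}{a}\e^{-a/2}\e^{-u/2}$ against $2\int_0^1 t^{-1/2}\e^{-a/(2t)}\,\D t\sim\tfrac{4}{a}\e^{-a/2}$) gives $\p(y^2(1-g^y)\le u)\to 1-\tfrac{1}{2}\e^{-u/2}$; at $u=0$ this equals $\tfrac{1}{2}$, in agreement with $\p(g^y\le 1)=\tfrac{1}{2}$ from \eqref{gForm}. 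If $u<0$ the integration additionally covers $t\in[1,1+|u|/y^2]$, so one needs $q^y(1+s/y^2,1)$ as $y\to-\infty$: rescaling the bridge (now from the very negative point $y$) shows its occupation time below $0$ is $1+(s-L_s)/y^2+o(1/y^2)$ with $L_s\to L$, hence $q^y(1+s/y^2,1)\to\p(L\ge s)$. Running the Laplace computation once more now gives $\p(g'\le u)=\tfrac{1}{2}-\tfrac{1}{4}\int_0^{-u}\p(L\ge s)\,\e^{s/2}\,\D s$ for $u\le 0$, and inserting the law of $L$ and substituting $t=2z-u$ turns this into the stated $\int_0^\infty 2z\bigl(2\pi(2z-u)\bigr)^{-1/2}\e^{-z}\,\D z$; again the ``in particular'' follows since $y^2\to\infty$.

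\emph{The law of $L$, and the main obstacle.} Both parts reduce to computing the law of the total positive occupation time $L$ of a Brownian motion $\eta$ with drift $-1$ started at $0$. I would argue probabilistically: the last zero $T:=\sup\{z\ge 0:\eta_z=0\}$ is a.s.\ finite with $\p(T>t)=\E[\min(1,\e^{2\eta_t})]=2\Phi(-\sqrt t)$, so $T$ has density $(2\pi t)^{-1/2}\e^{-t/2}$; conditionally on $\{T=t\}$ the path of $\eta$ on $[0,t]$ is a standard Brownian bridge from $0$ to $0$ (the drift is absorbed by conditioning on the endpoint), whence by the arcsine law for the Brownian bridge $L$ is uniform on $[0,t]$. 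Therefore $\p(L\ge s)=\int_s^\infty\frac{t-s}{t}(2\pi t)^{-1/2}\e^{-t/2}\,\D t$, which gives both the small-$s$ density $\sqrt{2/(\pi s)}$ used in part (a) and, after the substitution $t=2z+s$, exactly the kernel appearing in part (b). The main obstacle is making the two ``rescale the bridge near its endpoint'' steps rigorous --- controlling the small occupation times uniformly enough to justify the dominated-convergence passage and to identify the limits --- which one handles either by invoking the explicit formula \eqref{qFormPec} for $q^y(t,1)$ and estimating it directly, or by coupling the bridge near $0$ with a Brownian motion; the remainder is routine Laplace asymptotics.
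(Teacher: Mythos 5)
Your outline is essentially correct and, where it is explicit, agrees with the paper's computations: the Laplace estimates for the denominators (via the substitution $t=y^2/(y^2+2z)$), the $u\ge0$ case in part (b), and the reduction of part (a) to showing $y\,q^y(ry^2,1)\to 4/(\sqrt{2\pi}\,r)$ all match what happens in the paper. What is genuinely different is how you propose to obtain the two remaining asymptotics of $q^y$, namely $y\,q^y(ry^2,1)\to 4/(\sqrt{2\pi}\,r)$ for $y\to\infty$ and $q^y(1+s/y^2,1)\to \p(L\ge s)$ for $y\to-\infty$. You identify these with a single auxiliary random variable $L$, the positive occupation time of a Brownian motion with drift $-1$, whose law you derive by a last-zero path decomposition and the bridge arcsine law. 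This is an attractive probabilistic detour (and I checked that $\p(T>t)=2\Phi(-\sqrt t)$, the density $(2\pi t)^{-1/2}\e^{-t/2}$, the small-$s$ density $\sqrt{2/(\pi s)}$ of $L$, and the final substitution $t=2z+s$ reconciling your $\frac12-\frac14\int_0^{-u}\p(L\ge s)\e^{s/2}\,\D s$ with the stated $\int_0^\infty \frac{2z}{\sqrt{2\pi(2z-u)}}\e^{-z}\,\D z$ all work out). The paper does none of this: it substitutes the explicit double-integral formula \eqref{qFormEqn} for $q^y(t,1)$ into \eqref{gForm}, applies Fubini's theorem and two linear substitutions to pull the $y$-dependence into the integrand, and then lets dominated convergence do all the work, with no intermediate random variable $L$. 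That route is entirely computational and closes the argument without appealing to any local-limit or path-rescaling fact.

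The gap you yourself flag is real and nontrivial: the claims $y\,q^y(ry^2,1)\to 4/(\sqrt{2\pi}\,r)$ and $q^y(1+s/y^2,1)\to\p(L\ge s)$ are presented as scaling heuristics (``the bridge near its endpoint looks like a Brownian motion''), not proofs, and the $y$-uniform dominating bound needed for dominated convergence in part (a) is asserted but not produced. Turning the heuristic into a theorem requires either a quantitative coupling of the bridge near its last zero with a drifted Brownian motion, together with error control on the ``concentration near $v=1$'' step, or a direct estimate from the explicit formulas for $q$ --- in which case you are essentially doing what the paper does, and the detour through $L$ becomes a reinterpretation of the answer rather than a shortcut to it. For part (b), $u<0$, there is also a subsidiary issue left unaddressed: one must control the error term $\D t=\frac{1}{y^2}\D s$ transform uniformly over the unbounded range $t\in(1,\infty)$ (the paper handles this by exhibiting an explicit integrable majorant $\frac{4z}{\sqrt{2\pi(2z-u)}}\e^{-z}$ after the substitution). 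So the proposal is a correct and illuminating plan, but as written it relies on unproved asymptotics that carry the entire weight of the theorem; once one commits to proving them via the explicit formula for $q$, the argument collapses into the paper's.
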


\begin{figure}[h]
\centering
\label{Fig1}
\begin{minipage}[h]{0.496\textwidth}
\label{gPlot}
\includegraphics[width=\textwidth]{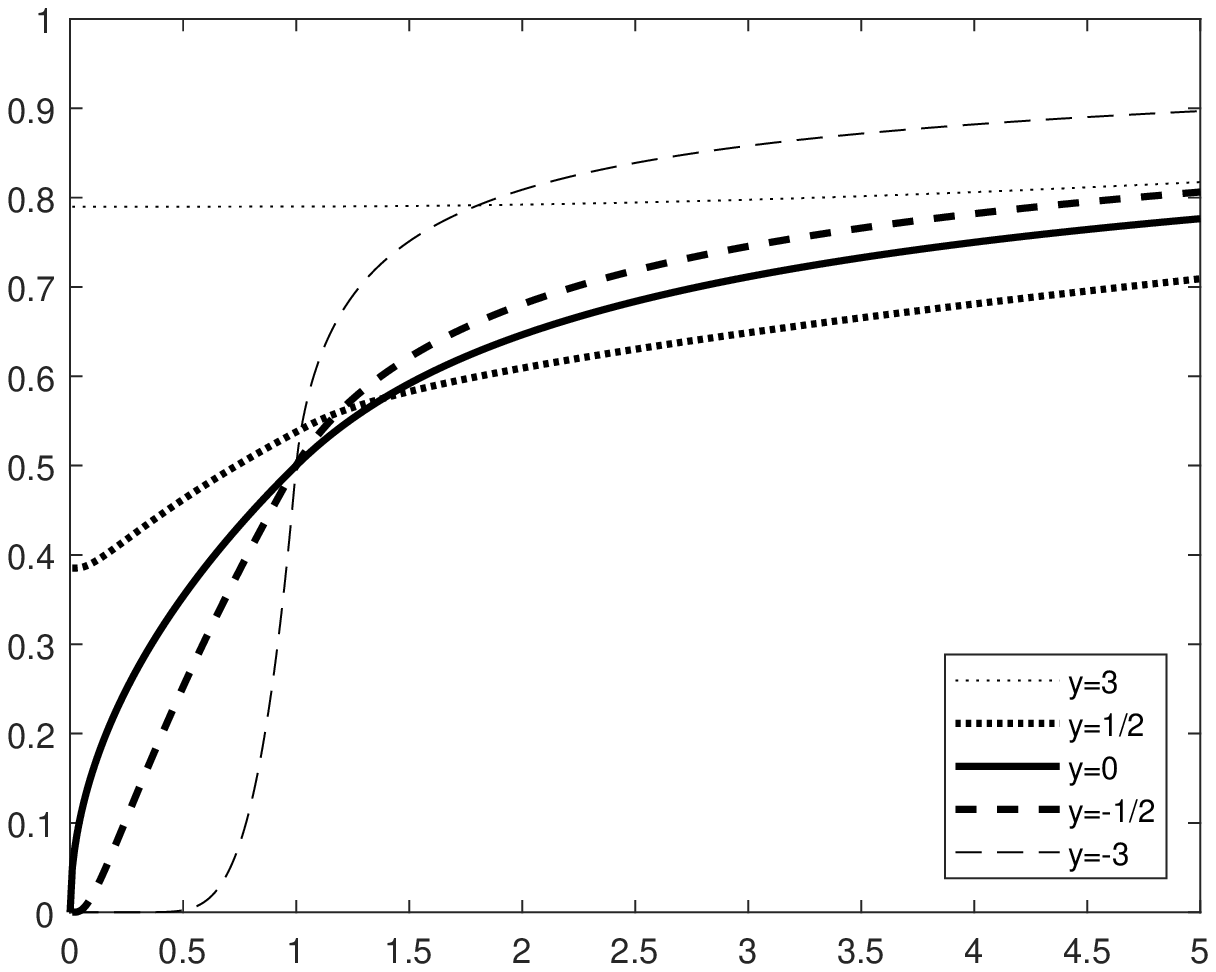}
\end{minipage}
\begin{minipage}[h]{0.496\textwidth}
\label{GammaPlot}
\includegraphics[width=\textwidth]{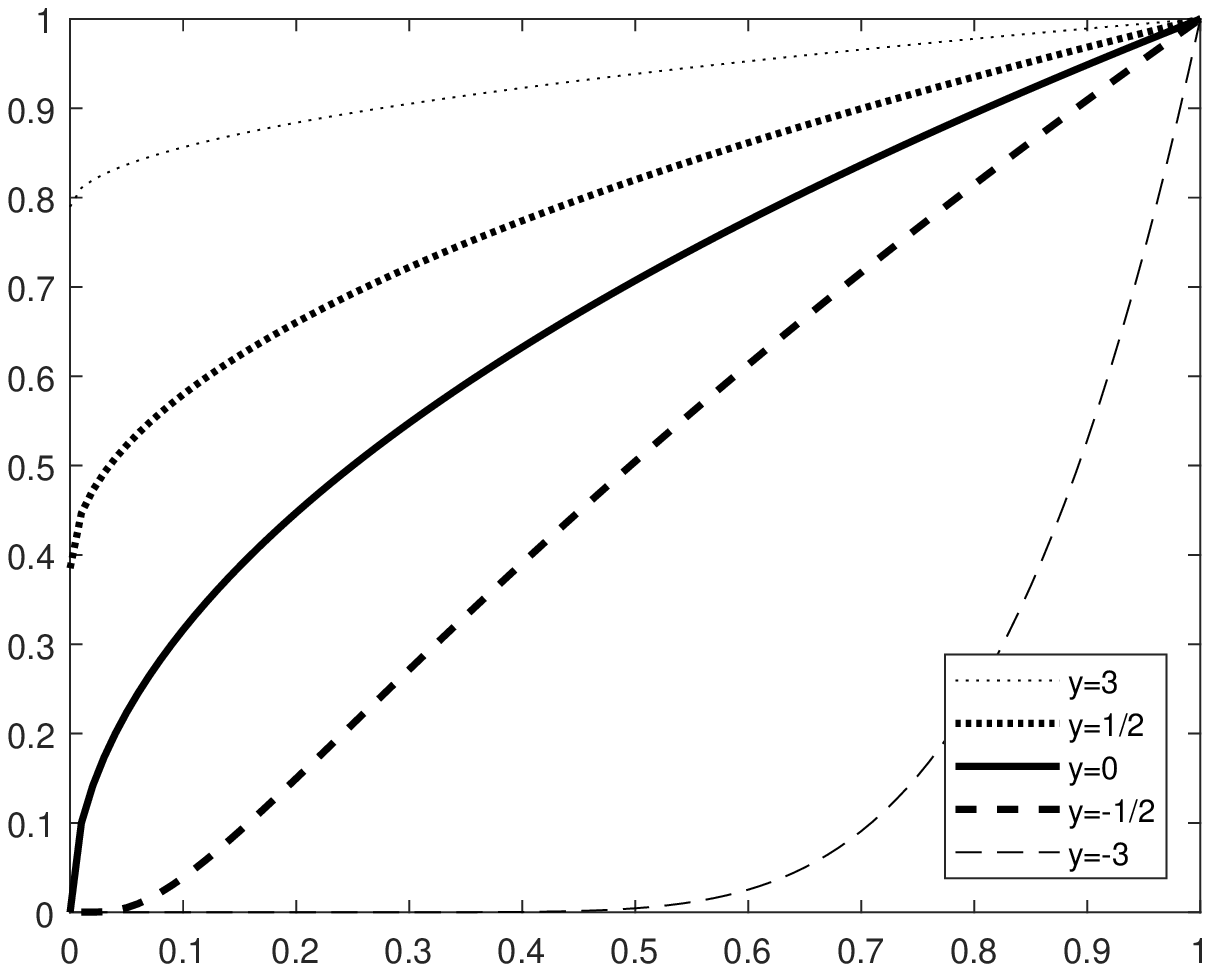}
\end{minipage}
\caption{Distribution functions of $g^y$ and of $\Gamma^y$, respectively, for several values of $y$}
\end{figure}

\section{Formulas for $q$ and Finiteness of $g$}
\label{Sec3}
Before we start proving Theorem~\ref{thm1}, we take a closer look at the distribution of $g$ given in~\eqref{gForm}. More precisely, we provide explicit formulas for $q$ (and hence for the distribution of $g$) and check that $g$ is finite almost surely. This is crucial for the limiting process $X$ to be well-defined. We start with an auxiliary result, which follows straight from a formula in \cite{BO99}:
\begin{lem}
\label{tauForm}
Given $y \neq 0$ as well as $T \gne 0$ and $z \in \R$, let $(b_t^z)_{t \in [0,T]}$ be a Brownian bridge of length $T$ with $b_0=y$ and $b_T=z$. We define $$\tau^z:= \min\{s \in [0,T]: b_s^z = 0\}$$ (with the convention $\min\emptyset:=T$). Then we have $$\p(\tau^z \in \D t) = \frac{\abs y\abs \sqrt{T}}{\sqrt{2\pi t^3(T-t)}}\e^{\frac{(y-z)^2}{2T}-\frac{z^2}{2(T-t)}-\frac{y^2}{2t}} \D t, \quad t \in (0,T).$$
This is a proper density integrating to~$1$ if and only if $zy \le 0$ holds.
\end{lem}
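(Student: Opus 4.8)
The plan is to reduce the lemma to the standard first-passage density of Brownian motion together with the Markov property; the stated formula is in any case immediate from the tables in \cite{BO99}, so citing suffices, but it is just as quick to derive it directly. Fix $y\ne 0$ and realize the bridge $(b_s^z)_{s\in[0,T]}$ as a Brownian motion $W=(W_s)_{s\ge 0}$ with $W_0=y$ conditioned on $\{W_T=z\}$. Writing $H:=\inf\{s\ge 0: W_s=0\}$, the law of $\tau^z$ is the law of $\min(H,T)$ under $\p(\,\cdot\mid W_T=z)$, since the restriction of $W$ to $[0,T]$ under this conditioning is precisely the bridge. So it suffices to compute the sub-probability density of $H$ on $(0,T)$ under $\p(\,\cdot\mid W_T=z)$.

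First I would recall the classical first-passage density $\p(H\in\D t)=\frac{\abs y\abs}{\sqrt{2\pi t^3}}\e^{-\frac{y^2}{2t}}\D t$ for $t\gne 0$ and the Gaussian kernel $p_s(a,b)=\frac{1}{\sqrt{2\pi s}}\e^{-\frac{(b-a)^2}{2s}}$. By the strong Markov property of $W$ at time $H$, for $t\in(0,T)$ one has $\p(H\in\D t,\,W_T\in\D z)=\p(H\in\D t)\,p_{T-t}(0,z)\,\D z$, and dividing by the normalizing constant $p_T(y,z)$ of the conditioning gives
$$\p(\tau^z\in\D t)=\frac{\p(H\in\D t)\,p_{T-t}(0,z)}{p_T(y,z)},\qquad t\in(0,T).$$
Substituting the three explicit densities, the prefactor becomes $\abs y\abs\sqrt{T}\big/\sqrt{2\pi t^3(T-t)}$ and the exponent becomes $\frac{(y-z)^2}{2T}-\frac{z^2}{2(T-t)}-\frac{y^2}{2t}$, which is exactly the asserted expression. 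This step is a routine algebraic simplification.

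For the final assertion I would observe that $\int_0^T\p(\tau^z\in\D t)$ equals $\p(\tau^z\lne T)$, i.e.\ the probability that a Brownian bridge from $y$ to $z$ of length $T$ visits $0$ strictly before time $T$. By the reflection principle (equivalently, by the well-known bridge crossing probability: a Brownian bridge between two points on the same side of $0$, at distances $\abs y\abs$ and $\abs z\abs$ from $0$, avoids $0$ with probability $1-\e^{-2\abs y\abs\,\abs z\abs/T}$, whereas a bridge between points on opposite sides of $0$ hits $0$ almost surely), this probability equals $1$ when $zy\le 0$ and equals $\e^{-2zy/T}\lne 1$ when $zy\gne 0$. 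Hence the density integrates to $1$ precisely when $zy\le 0$. The proof involves nothing deep, and the main obstacle is merely bookkeeping: the one point that deserves a moment's attention is the degenerate endpoint $z=0$, for which one records separately that the bridge hits $0$ before time $T$ almost surely, consistently with $zy=0\le 0$.
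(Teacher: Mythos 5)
Your proof is correct, and it takes a genuinely different and more self-contained route than the paper. The paper reduces the problem by a sign symmetry to the first hitting time of a positive level $\abs y\abs$ by a Brownian bridge from $0$ to $(y-z)\op{sgn}(y)$ and then cites formula (2.15) of \cite{BO99}; it proves essentially nothing from scratch. You instead realize the bridge as Brownian motion $W$ started at $y$ conditioned on $W_T=z$, use the strong Markov property at the first hitting time $H$ of $0$ to write $\p(H\in\D t,\,W_T\in\D z)=\p(H\in\D t)\,p_{T-t}(0,z)\,\D z$, and divide by $p_T(y,z)$ --- a transparent Bayes-type argument requiring only the classical first-passage density and the heat kernel. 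The prefactor and exponent work out exactly as you say. Your handling of the normalization condition is also correct and a bit more explicit than the paper's one-line remark: integrating the density over $(0,T)$ gives the bridge-hitting probability of $0$, which is $1$ when $y$ and $z$ lie on opposite sides of (or on) the origin and is $\e^{-2zy/T}\lne 1$ when $zy\gne 0$, and you rightly flag the $z=0$ endpoint, where the bridge still hits $0$ in $(0,T)$ almost surely. The trade-off is that your argument is longer but elementary and requires no external table lookup, whereas the paper's is shorter at the cost of leaning on \cite{BO99}.
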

\begin{proof}
Let $(\bar{b}^z_t)_{t \in [0,T]}$ be a Brownian bridge of length $T$ with $\bar{b}^z_0=0$ and $\bar{b}^z_T= (y-z)\op{sgn}(y)$. Using a symmetry argument in the case $y \gne 0$, we get $$\tau^z \stackrel{d}{=} \min \{s \in [0,T]: \bar{b}_s^z = \abs y\abs\}.$$ The claim now immediately from formula~(2.15) of \cite{BO99}. Note that this formula is valid for all $\eta \in \R$ and not only for $\eta \lne \beta$ (compare Theorem~2.1 in \cite{BO99}).
\end{proof}
Combining this lemma with Lévy's result that the occupation time below~$0$ of a Brownian bridge without drift is uniformly distributed (see \cite{Lev40}), we compute the distribution function $q=q^y$ (see~\eqref{qDef}) of the occupation time of a Brownian bridge with drift starting in~$y$.
\begin{lem}
\label{qForm}
Given $t \gne 0$ and $u \in [0,t)$, we have
\begin{align}
\label{qFormEqn}
q(t,u) =
\begin{cases}
\dfrac{u}{t},& \quad y=0,\bigskip\\
\displaystyle\int_0^u \dfrac{\sqrt{t}(u-x)\abs y\abs}{\sqrt{2\pi x^3 (t-x)^3}} \e^{\frac{y^2}{2t}-\frac{y^2}{2x}} \D x,& \quad y\lne 0,\bigskip\\
\displaystyle\int_0^{t-u} \dfrac{\sqrt{t}uy}{\sqrt{2\pi x^3 (t-x)^3}} \e^{\frac{y^2}{2t}-\frac{y^2}{2x}} \D x+ \displaystyle\int_{t-u}^t \dfrac{\sqrt{t}y}{\sqrt{2\pi x^3 (t-x)}} \e^{\frac{y^2}{2t}-\frac{y^2}{2x}} \D x,& \quad y \gne 0.
\end{cases}
\end{align}
\end{lem}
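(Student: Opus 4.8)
The plan is to reduce the computation to L\'evy's arcsine law for the Brownian bridge by conditioning on the first time $b'$ hits~$0$. The case $y=0$ needs nothing new: there $b'$ is a standard Brownian bridge of length $t$, so by \cite{Lev40} its occupation time below~$0$ is uniform on $[0,t]$, giving $q(t,u)=u/t$ for $u\in[0,t)$.

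Now assume $y\neq 0$ and put $\tau:=\min\{s\in[0,t]:b'_s=0\}$ (with $\min\emptyset:=t$). This is exactly the variable $\tau^z$ of Lemma~\ref{tauForm} with endpoint $z=0$; since $zy=0\le 0$, that lemma tells us $\tau\lne t$ almost surely and gives
\begin{equation*}
\p(\tau\in\D s)=\frac{|y|\sqrt{t}}{\sqrt{2\pi s^3(t-s)}}\,\e^{\frac{y^2}{2t}-\frac{y^2}{2s}}\,\D s,\qquad s\in(0,t).
\end{equation*}
Because $b'$ starts at $y\neq 0$ and is continuous, on $\{\tau=s\}$ the path keeps the sign of $y$ on all of $[0,s)$, so the occupation time below~$0$ accumulated on $[0,s]$ equals $s$ when $y\lne 0$ and $0$ when $y\gne 0$. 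Applying the Markov property of the Brownian bridge at the stopping time $\tau$, conditionally on $\{\tau=s\}$ the shifted path $(b'_{s+r})_{r\in[0,t-s]}$ is a standard Brownian bridge of length $t-s$ pinned at $0$ on both ends and independent of $\tau$; a second use of L\'evy's result then shows that the occupation time below~$0$ accumulated on $[s,t]$ is, conditionally on $\{\tau=s\}$, uniform on $[0,t-s]$ and independent of $\tau$. Hence, conditionally on $\{\tau=s\}$, the total occupation time below~$0$ has the law of $s\,\1_{\{y\lne 0\}}+(t-s)U$ with $U$ uniform on $[0,1]$.

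Integrating this conditional law against the density of $\tau$ finishes the proof. For $y\lne 0$ one gets $q(t,u)=\int_0^t\frac{(u-s)^+}{t-s}\,\p(\tau\in\D s)$; since $u\lne t$ the integrand vanishes for $s\gne u$, and after renaming the integration variable this is the stated formula. For $y\gne 0$ one splits the integral at $s=t-u$, according to whether $(t-s)U\le u$ holds surely (when $s\ge t-u$) or with probability $u/(t-s)$ (when $s\lne t-u$), and uses $|y|=y$; this produces the two integrals in \eqref{qFormEqn}. The only genuinely delicate point, and the one I would take care to write out, is the decomposition at $\tau$: one has to argue rigorously that the post-$\tau$ piece is a Brownian bridge of length $t-\tau$ from $0$ to $0$, independent of $\tau$. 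This rests on the (strong) Markov property of the Brownian bridge regarded as an inhomogeneous diffusion, together with the fact --- guaranteed by Lemma~\ref{tauForm} because $zy=0$ --- that no mass escapes to $\{\tau=t\}$. The remaining integral manipulations are routine.
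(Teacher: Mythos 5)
Your proposal is correct and follows essentially the same route as the paper: condition on the first zero $\tau$ of the bridge, observe that the pre-$\tau$ occupation time is deterministic given $\tau$ (namely $\tau$ if $y<0$ and $0$ if $y>0$), recognize the post-$\tau$ piece as a standard Brownian bridge independent of $\tau$, and apply L\'evy's uniform law before integrating against the density from Lemma~\ref{tauForm}. The paper phrases the post-$\tau$ decomposition via the rescaled process $\hat b_s=\frac{1}{\sqrt{t-\tau'}}b'_{\tau'+s(t-\tau')}$ rather than quoting the Markov property directly, but this is the same argument.
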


After a linear time change, formula~(E-4) of \cite{Pec}, which is proved using Girsanov's theorem, provides the alternative representation
\begin{align}
\label{qFormPec}
q(t,u)
=
\begin{cases}
-2\left(\dfrac{t-u}{t}\left(1-\dfrac{y^2}{t}\right)-1\right) \Phi\left(\dfrac{y\sqrt{t-u}}{\sqrt{tu}}\right) +\dfrac{\sqrt{2u(t-u)}y}{\sqrt{\pi t^3}} \e^{\frac{y^2}{2t}-\frac{y^2}{2u}},& \quad y\le 0,\bigskip\\
1+ 2\left(\dfrac{u}{t}\left(1-\dfrac{y^2}{t}\right)-1\right) \Phi\left(-\dfrac{y\sqrt{u}}{\sqrt{t (t-u)}}\right)+\dfrac{\sqrt{2u(t-u)}y}{\sqrt{\pi t^3}} \e^{\frac{y^2}{2t}-\frac{y^2}{2(t-u)}},& \quad y\ge 0,
\end{cases}
\end{align}
for all $t \gne 0$ and $u \in [0,t)$, where $\Phi: \R \to \R$ denotes the distribution function of the standard normal distribution.
\begin{proof}[Proof of Lemma~\ref{qForm}.]
As already mentioned, the formula for $y=0$ is a classical result by Lévy (see \cite{Lev40}). Now let $y\neq 0$ and let $(b'_s)_{s \in [0,t]}$ be a Brownian bridge of length~$t$ with $b'_0=y$ and $b'_t=0$. Furthermore, let $\tau':= \min\{s \in [0,t]: b'_s=0\}$ be the first zero of $b'$.
Then $$\big(\hat{b}_s\big)_{s \in [0,1]}:= \left(\frac{1}{\sqrt{t-\tau'}}b'_{\tau' + s(t-\tau')}\right)_{s \in [0,1]}$$ is a standard Brownian bridge independent of $\tau'$.
By Lévy's result, $\int_0^1 \1_{\{\hat{b}_s \lne 0\}} \D s$ is uniformly distributed on $[0,1]$. Now let $y \lne 0$. We observe $b'_s \lne 0$ for all $s \lne \tau'$. Together with Lemma~\ref{tauForm}, we obtain
\begin{align*}
q(t,u)
=& \p\left(\int_{\tau'}^t \1_{\{b'_s \lne 0\}} \D s \le u-\tau' \right)
= \p\left(\int_0^1 \1_{\{\hat{b}_s \lne 0\}} \D s \le \frac{u-\tau'}{t-\tau'} \right)\\
=& \int_0^u \p\left(\int_0^1 \1_{\{\hat{b}_s \lne 0\}} \D s \le \frac{u-x}{t-x} \right) \p(\tau' \in \D x)
= \int_0^u \frac{u-x}{t-x}\cdot\frac{\sqrt{t}\abs y\abs}{\sqrt{2\pi x^3 (t-x)}} \e^{\frac{y^2}{2t}-\frac{y^2}{2x}} \D x.
\end{align*}
Given $y \gne 0$, we similarly observe $b'_s \gne 0$ for all $s \lne \tau'$ and obtain
\begin{align*}
q(t,u)
=& \p\left(\int_{\tau'}^t \1_{\{b'_s \lne 0\}} \D s \le u \right)
= \p\left(\int_0^1 \1_{\{\hat{b}_s \lne 0\}} \D s \le \frac{u}{t-\tau'} \right)\\
=& \int_0^t \p\left(\int_0^1 \1_{\{\hat{b}_s \lne 0\}} \D s \le \frac{u}{t-x} \right) \p(\tau' \in \D x)
= \int_0^t \left(\frac{u}{t-x}\wedge 1\right)\frac{\sqrt{t}y}{\sqrt{2\pi x^3 (t-x)}} \e^{\frac{y^2}{2t}-\frac{y^2}{2x}} \D x
\end{align*}
proving the claim.
\end{proof}

Next we prove that $g$ is almost surely finite. This will essentially follow from the subsequent lemma as we will see in Corollary \ref{gFinite} below.
The more general formulation of the lemma will help us to determine the distribution of $\Gamma$ (see Theorem~\ref{thm2}). Besides, it is rather difficult to verify the formulas directly for a single $u \gne 0$.

\begin{lem}
\label{IntLem}
We have
$$\int_u^\infty q(t,u) \frac{1}{\sqrt{t}} \e^{-\frac{y^2}{2t}} \D t = \int_0^u \frac{1}{\sqrt{t}} \e^{-\frac{y^2}{2t}} \D t, \quad u \ge 0,\ y \lne 0$$ and $$\int_0^u \frac{1}{\sqrt{t}} \e^{-\frac{y^2}{2t}} \D t + \int_u^\infty q(t,u) \frac{1}{\sqrt{t}} \e^{-\frac{y^2}{2t}} \D t = 4\sqrt{u}, \quad u \ge 0,\ y \gne 0.$$
\end{lem}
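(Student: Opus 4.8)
The plan is to write $q(t,u)$ as the integral $\int_0^u\partial_v q(t,v)\,\D v$ of an occupation‑time density, to interchange that $v$‑integration with the $t$‑integration, and then to recognise what is left as the first‑passage decomposition of the Gaussian heat kernel.

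Since $q(t,u)=1$ for $t\le u$ by \eqref{q=1}, one has $\int_0^\infty q(t,u)\,\tfrac{1}{\sqrt t}\,\e^{-\frac{y^2}{2t}}\,\D t=\int_0^u\tfrac{1}{\sqrt t}\,\e^{-\frac{y^2}{2t}}\,\D t+\int_u^\infty q(t,u)\,\tfrac{1}{\sqrt t}\,\e^{-\frac{y^2}{2t}}\,\D t$, so both claimed identities are equivalent to
\[
\int_0^\infty q(t,u)\,\tfrac{1}{\sqrt t}\,\e^{-\frac{y^2}{2t}}\,\D t=
\begin{cases}
2\int_0^u\tfrac{1}{\sqrt t}\,\e^{-\frac{y^2}{2t}}\,\D t,& y\lne 0,\\
4\sqrt u,& y\gne 0,
\end{cases}
\qquad u\ge 0.
\]
From the explicit formulas of Lemma~\ref{qForm} — together with Lemma~\ref{tauForm} (taken with $z=0$), which gives $\lim_{v\uparrow t}q(t,v)=1$ — one reads off that, for fixed $t\gne 0$, the function $v\mapsto q(t,v)$ is continuous, vanishes at $v=0$, equals $1$ for $v\ge t$, and satisfies $q(t,u)=\int_0^u\partial_v q(t,v)\,\D v$ for every $u\ge 0$, where $\partial_v q(t,v)=0$ for $v\gne t$ and, for $0\lne v\lne t$,
\[
\partial_v q(t,v)=\int_0^v\frac{\sqrt t\,|y|}{\sqrt{2\pi x^3(t-x)^3}}\,\e^{\frac{y^2}{2t}-\frac{y^2}{2x}}\,\D x\ \ (y\lne 0),\qquad
\partial_v q(t,v)=y\int_0^{t-v}\frac{\sqrt t}{\sqrt{2\pi x^3(t-x)^3}}\,\e^{\frac{y^2}{2t}-\frac{y^2}{2x}}\,\D x\ \ (y\gne 0).
\]
In the case $y\gne 0$ this uses that the two boundary terms produced by differentiating the variable integration limits $t-v$ in the two summands of \eqref{qFormEqn} cancel: each equals $v^{-1/2}$ times the common factor $\tfrac{\sqrt t\,y}{\sqrt{2\pi(t-v)^3}}\,\e^{\frac{y^2}{2t}-\frac{y^2}{2(t-v)}}$, with opposite signs.

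All the integrands being non‑negative, Tonelli's theorem then gives
\[
\int_0^\infty q(t,u)\,\tfrac{1}{\sqrt t}\,\e^{-\frac{y^2}{2t}}\,\D t=\int_0^u\left(\int_v^\infty \partial_v q(t,v)\,\tfrac{1}{\sqrt t}\,\e^{-\frac{y^2}{2t}}\,\D t\right)\D v,
\]
and it remains to evaluate, for each fixed $v\gne 0$, the inner integral. After cancelling $\sqrt t$ against $1/\sqrt t$ and $\e^{y^2/2t}$ against $\e^{-y^2/2t}$, and interchanging the order of integration once more, the $t$‑integral is elementary: $\int_v^\infty (t-x)^{-3/2}\,\D t=2(v-x)^{-1/2}$ for $y\lne 0$, and $\int_{x+v}^\infty (t-x)^{-3/2}\,\D t=2v^{-1/2}$ for $y\gne 0$. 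For $y\gne 0$ one is left with $\tfrac{2}{\sqrt v}\int_0^\infty\tfrac{y}{\sqrt{2\pi x^3}}\,\e^{-\frac{y^2}{2x}}\,\D x=\tfrac{2}{\sqrt v}$, the last integral being the total mass of the density of the first hitting time of $0$ by a Brownian motion started in $y$; integrating in $v$ yields $4\sqrt u$. For $y\lne 0$ one is left with
\[
2\sqrt{2\pi}\int_0^v \frac{|y|}{\sqrt{2\pi x^3}}\,\e^{-\frac{y^2}{2x}}\cdot\frac{1}{\sqrt{2\pi(v-x)}}\,\D x,
\]
where the remaining integral is exactly the first‑passage decomposition of the Gaussian transition density, hence equals $\tfrac{1}{\sqrt{2\pi v}}\,\e^{-y^2/2v}$, and the inner integral equals $\tfrac{2}{\sqrt v}\,\e^{-y^2/2v}$; integrating in $v$ produces $2\int_0^u\tfrac{1}{\sqrt t}\,\e^{-\frac{y^2}{2t}}\,\D t$. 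This establishes the reformulated identity, and hence the lemma.

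The only genuinely delicate step is the computation of $\partial_v q(t,v)$ for $y\gne 0$: one has to differentiate the two summands of \eqref{qFormEqn}, each of which carries both a prefactor and a variable integration limit $t-v$, and check that the two boundary contributions cancel exactly. Everything else is an application of Tonelli's theorem, one explicit elementary integral, and the classical first‑passage decomposition of the heat kernel (the strong Markov property at the first hitting time of $0$). Alternatively one could insert the closed forms \eqref{qFormPec} directly into the $t$‑integral, but the cancellations that drive the computation would then be far less transparent.
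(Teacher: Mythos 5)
Your proof is correct, and it takes a genuinely different route from the paper's. The paper proves the lemma by showing that both sides, viewed as functions of $u$, vanish at $u=0$ and have the same derivative; this requires a careful domination argument to justify differentiating the $t$-integral under the integral sign, and then invokes Lemma~\ref{tauForm} to recognise that the resulting integral is the total mass of the first-passage density of a Brownian \emph{bridge}. You instead write $q(t,u)=\int_0^u\partial_v q(t,v)\,\D v$, integrate rather than differentiate, and apply Tonelli's theorem twice; the non-negativity of the integrand makes the interchange free, so the technical domination lemma disappears entirely. In your version the probabilistic identity you need is the first-passage decomposition of the Gaussian kernel for a Brownian \emph{motion} (for $y\lne 0$) resp.\ the normalisation of the Brownian first-hitting-time density (for $y\gne 0$, which is also what the paper's substitution $x=y^2/z^2$ computes). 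Both proofs hinge on exactly the same cancellation of boundary terms in $\partial_v q(t,v)$ for $y\gne 0$, which you correctly single out as the one delicate step. Your argument is, if anything, technically lighter. One small point you pass over quickly: you need $q(t,\cdot)$ to be absolutely continuous on $[0,t]$ with $\lim_{v\uparrow t}q(t,v)=1$, so that the representation $q(t,u)=\int_0^u\partial_v q(t,v)\,\D v$ is legitimate for every $u\ge 0$; this does follow from continuity, $q(t,0)=0$, the explicit $C^1$-formula on $(0,t)$ and the fact that the occupation time of a Brownian bridge ending at $0$ is a.s.\ strictly less than $t$, but it deserves a sentence.
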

\begin{cor}
\label{gFinite}
The random variable $g$, as defined in~\eqref{gForm}, is almost surely finite.
\end{cor}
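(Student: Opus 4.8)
\section*{Proof proposal for Corollary \ref{gFinite}}

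The plan is to check, for each of the three cases in~\eqref{gForm}, that the candidate distribution function $x \mapsto \p(g \le x)$ is non-decreasing in $x$ (which holds because $q^y(t,1) \ge 0$, so the numerators in~\eqref{gForm} are non-decreasing in $x$) and that it tends to $1$ as $x \to \infty$. Since $\{g \lne \infty\} = \bigcup_{n \in \N}\{g \le n\}$, continuity of probability from below then yields $\p(g \lne \infty) = \lim_{x \to \infty}\p(g \le x) = 1$. For $y=0$ the limit is immediate: from~\eqref{gForm}, $\p(g \le x) = 1 - \frac{1}{2\sqrt{x}} \to 1$ as $x \to \infty$.

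For $y \neq 0$ the task reduces to computing $I := \int_0^\infty q^y(t,1)\frac{1}{\sqrt{t}}\e^{-\frac{y^2}{2t}}\,\D t$ and matching it with the normalising constant in~\eqref{gForm}. I would split $I = \int_0^1 + \int_1^\infty$. On $(0,1]$ we have $1 \ge t$, so~\eqref{q=1} gives $q^y(t,1)=1$, whence
$$\int_0^1 q^y(t,1)\frac{1}{\sqrt{t}}\e^{-\frac{y^2}{2t}}\,\D t = \int_0^1 \frac{1}{\sqrt{t}}\e^{-\frac{y^2}{2t}}\,\D t.$$
For the tail integral I would invoke Lemma~\ref{IntLem} with $u=1$: for $y \lne 0$ it gives $\int_1^\infty q^y(t,1)\frac{1}{\sqrt{t}}\e^{-\frac{y^2}{2t}}\,\D t = \int_0^1 \frac{1}{\sqrt{t}}\e^{-\frac{y^2}{2t}}\,\D t$, so that $I = 2\int_0^1 \frac{1}{\sqrt{t}}\e^{-\frac{y^2}{2t}}\,\D t$, which is exactly the denominator in the $y\lne 0$ line of~\eqref{gForm}; for $y \gne 0$ it gives $\int_1^\infty q^y(t,1)\frac{1}{\sqrt{t}}\e^{-\frac{y^2}{2t}}\,\D t = 4 - \int_0^1 \frac{1}{\sqrt{t}}\e^{-\frac{y^2}{2t}}\,\D t$, so that $I = 4$, and hence the numerator in the $y\gne 0$ line of~\eqref{gForm} tends to $2\sqrt{2\pi}\,y + 4$, again the denominator. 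In both cases $\p(g \le x) \to 1$ as $x \to \infty$.

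Since Lemma~\ref{IntLem} is already established, there is no substantive obstacle here: the only points requiring care are the bookkeeping of the normalising constants and the observation that on $(0,1]$ one must use~\eqref{q=1} rather than the explicit formulas of Lemma~\ref{qForm}, since those are stated only for $u \lne t$. Carrying out this splitting and matching completes the proof.
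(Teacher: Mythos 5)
Your proposal is correct and follows essentially the same route as the paper: split the integral $\int_0^\infty q^y(t,1)\frac{1}{\sqrt{t}}\e^{-y^2/2t}\,\D t$ at $t=1$, use \eqref{q=1} on $(0,1]$, and apply Lemma~\ref{IntLem} with $u=1$ to the tail, matching the normalising constants in \eqref{gForm}. The only minor difference is your explicit mention of monotonicity and continuity of probability from below, which the paper leaves implicit.
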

\begin{proof}
For $y=0$, the claim is clear from the definition. Applying Lemma~\ref{IntLem} with $u=1$ in the final step, we obtain
\begin{align*}
\p(g\lne \infty) \stackrel{\eqref{gForm}}{=} \dfrac{\int_0^\infty q(t,1) \frac{1}{\sqrt{t}}\e^{-\frac{y^2}{2t}} \D t}{2\int_0^1 \frac{1}{\sqrt{t}}\e^{-\frac{y^2}{2t}} \D t} \stackrel{\eqref{q=1}}{=} \dfrac{\int_0^1 \frac{1}{\sqrt{t}}\e^{-\frac{y^2}{2t}} \D t + \int_1^\infty q(t,1) \frac{1}{\sqrt{t}}\e^{-\frac{y^2}{2t}} \D t}{2\int_0^1 \frac{1}{\sqrt{t}}\e^{-\frac{y^2}{2t}} \D t}
=1, \quad y \lne 0.
\end{align*}
Similarly, we get
\begin{align*}
\p(g\lne \infty)
=\dfrac{2 \sqrt{2\pi} y + \int_0^1 \frac{1}{\sqrt{t}}\e^{-\frac{y^2}{2t}} \D t + \int_1^\infty q(t,1) \frac{1}{\sqrt{t}}\e^{-\frac{y^2}{2t}} \D t}{2 \sqrt{2\pi} y+4}
=1, \quad y \gne 0,
\end{align*}
proving the claim.
\end{proof}
\begin{proof}[Proof of Lemma~\ref{IntLem}.]
Noting $q(t,0)=0$ for all $t \gne 0$, both formulas hold for $u=0$. Hence it suffices to show that the derivatives of the left and the right-hand side coincide in both cases.\\
First we consider $y \lne 0$. Given $t \ge 0$, an application of Leibniz's rule yields $$\frac{\D}{\D u}  q(t,u) \frac{1}{\sqrt{t}} \e^{-\frac{y^2}{2t}} \stackrel{\eqref{qFormEqn}}{=} \frac{\D}{\D u} \int_0^u\frac{(u-x)\abs y\abs}{\sqrt{2\pi x^3 (t-x)^3}} \e^{-\frac{y^2}{2x}}\D x = \int_0^u\frac{\abs y\abs}{\sqrt{2\pi x^3 (t-x)^3}} \e^{-\frac{y^2}{2x}} \D x$$ for all $u \in (0,t)$. Now fix $u_0 \gne 0$. Noting
\begin{align*}
\int_{u_0}^\infty \sup_{u \in (0,u_0)} \left\abs \frac{\D}{\D u}  q(t,u) \frac{1}{\sqrt{t}} \e^{-\frac{y^2}{2t}}\right\abs \D t
=& \int_{u_0}^\infty \sup_{u \in (0,u_0)} \left\abs \int_0^u\frac{\abs y\abs}{\sqrt{2\pi x^3 (t-x)^3}} \e^{-\frac{y^2}{2x}} \D x\right\abs \D t\\
=& \int_{u_0}^\infty\int_0^{u_0}\frac{\abs y\abs}{\sqrt{2\pi x^3 (t-x)^3}} \e^{-\frac{y^2}{2x}} \D x\D t\\
=& \int_0^{u_0} \frac{2\abs y\abs}{\sqrt{2\pi x^3 (u_0-x)}} \e^{-\frac{y^2}{2x}} \D x\\
\lne& \infty,
\end{align*}
we can differentiate w.r.t.~$u \in (0,u_0)$ under the integral $\int_{u_0}^\infty$. Together with Leibniz's rule applied to the integral $\int_u^{u_0}$, we obtain 
\begin{align*}
\frac{\D}{\D u} \int_u^\infty q(t,u) \frac{1}{\sqrt{t}} \e^{-\frac{y^2}{2t}} \D t
=& \int_u^\infty \frac{\D}{\D u}  q(t,u) \frac{1}{\sqrt{t}} \e^{-\frac{y^2}{2t}} \D t- q(u,u) \frac{1}{\sqrt{u}} \e^{-\frac{y^2}{2u}}\\
=& \int_u^\infty \int_0^u\frac{\abs y\abs}{\sqrt{2\pi x^3 (t-x)^3}} \e^{-\frac{y^2}{2x}} \D x \D t- \frac{1}{\sqrt{u}} \e^{-\frac{y^2}{2u}}\\
=& \int_0^u\frac{2\abs y\abs}{\sqrt{2\pi x^3 (u-x)}} \e^{-\frac{y^2}{2x}} \D x- \frac{1}{\sqrt{u}} \e^{-\frac{y^2}{2u}}\\
=& \int_0^u \frac{\abs y\abs\sqrt{u}}{\sqrt{2\pi x^3 (u-x)}} \e^{\frac{y^2}{2u}-\frac{y^2}{2x}} \D x\cdot \frac{2}{\sqrt{u}}\e^{-\frac{y^2}{2u}}- \frac{1}{\sqrt{u}} \e^{-\frac{y^2}{2u}}
\end{align*}
for all $u \in (0,u_0)$ and consequently for all $u \gne 0$.
Given a Brownian bridge $(b'_{s,u})_{s \in [0,u]}$ of length~$u$ with $b'_{0,u}=y$ and $b'_{u,u}=0$, Lemma~\ref{tauForm} implies $$1 = \p(\min\{s \in [0,u]: b_{s,u}'=0\} \in (0,u)) = \int_0^u\frac{\abs y\abs\sqrt{u}}{\sqrt{2\pi x^3 (u-x)}} \e^{\frac{y^2}{2u}-\frac{y^2}{2x}} \D x$$ so that we can deduce
\begin{align*}
\frac{\D}{\D u} \int_u^\infty q(t,u) \frac{1}{\sqrt{t}} \e^{-\frac{y^2}{2t}} \D t
=& 1\cdot \frac{2}{\sqrt{u}}\e^{-\frac{y^2}{2u}}- \frac{1}{\sqrt{u}} \e^{-\frac{y^2}{2u}}
= \frac{1}{\sqrt{u}} \e^{-\frac{y^2}{2u}} = \frac{\D}{\D u} \int_0^u \frac{1}{\sqrt{t}} \e^{-\frac{y^2}{2t}} \D t
\end{align*}
for all $u \gne 0$, as claimed.
\\
Now we consider $y \gne 0$. Given $t \ge 0$, an application of Leibniz's rule yields
\begin{align}
\label{dduq}
\frac{\D}{\D u} q(t,u)  \frac{1}{\sqrt{t}} \e^{-\frac{y^2}{2t}}
\stackrel{\eqref{qFormEqn}}{=}& \frac{\D}{\D u} \int_0^{t-u} \frac{uy}{\sqrt{2\pi x^3 (t-x)^3}} \e^{-\frac{y^2}{2x}} \D x+ \frac{\D}{\D u} \int_{t-u}^t \frac{y}{\sqrt{2\pi x^3 (t-x)}} \e^{-\frac{y^2}{2x}} \D x\notag\\
=&\int_0^{t-u} \frac{y}{\sqrt{2\pi x^3 (t-x)^3}} \e^{-\frac{y^2}{2x}} \D x -\frac{uy}{\sqrt{2\pi (t-u)^3 u^3}} \e^{-\frac{y^2}{2(t-u)}} + \frac{y}{\sqrt{2\pi (t-u)^3 u}} \e^{-\frac{y^2}{2(t-u)}}\notag\\
=& \int_0^{t-u} \frac{y}{\sqrt{2\pi x^3 (t-x)^3}} \e^{-\frac{y^2}{2x}} \D x
\end{align}
for all $u \in (0,t)$. Now fix $u_0, \eps \gne 0$. We get
\begin{align*}
\int_{u_0}^\infty \sup_{u \in (\eps,u_0)} \left\abs \frac{\D}{\D u}  q(t,u) \frac{1}{\sqrt{t}} \e^{-\frac{y^2}{2t}}\right\abs \D t
=& \int_{u_0}^\infty \sup_{u \in (\eps,u_0)} \left\abs \int_0^{t-u} \frac{y}{\sqrt{2\pi x^3 (t-x)^3}} \e^{-\frac{y^2}{2x}} \D x\right\abs \D t\\
=& \int_{u_0}^\infty\int_0^{t-\eps}\frac{y}{\sqrt{2\pi x^3 (t-x)^3}} \e^{-\frac{y^2}{2x}} \D x\D t\\
=& \int_0^\infty\int_{u_0\vee (x+\eps)}^\infty\frac{y}{\sqrt{2\pi x^3 (t-x)^3}} \e^{-\frac{y^2}{2x}} \D t\D x\\
=& \int_0^\infty \frac{2y}{\sqrt{2\pi x^3 ((u_0-x)\vee \eps)}} \e^{-\frac{y^2}{2x}} \D x\\
\lne& \infty
\end{align*}
so that we can differentiate w.r.t.~$u \in (\eps,u_0)$ under the integral $\int_{u_0}^\infty$. Together with Leibniz's rule applied to the integral $\int_u^{u_0}$ and with $q(u,u)=1$, we obtain 
\begin{align*}
&\frac{\D}{\D u} \left(\int_0^u \frac{1}{\sqrt{t}} \e^{-\frac{y^2}{2t}} \D t + \int_u^\infty q(t,u) \frac{1}{\sqrt{t}} \e^{-\frac{y^2}{2t}} \D t\right)\\
\stackrel{\hphantom{\eqref{dduq}}}{=}& \frac{1}{\sqrt{u}} \e^{-\frac{y^2}{2u}} + \int_u^\infty \frac{\D}{\D u}  q(t,u) \frac{1}{\sqrt{t}} \e^{-\frac{y^2}{2t}} \D t- q(u,u) \frac{1}{\sqrt{u}} \e^{-\frac{y^2}{2u}}\\
\stackrel{\eqref{dduq}}{=}& \int_u^\infty \int_0^{t-u} \frac{y}{\sqrt{2\pi x^3 (t-x)^3}} \e^{-\frac{y^2}{2x}} \D x \D t
\end{align*}
for all $u \in (\eps,u_0)$ and consequently for all $u \gne 0$. Using the substitution $x=\frac{y^2}{z^2}$, we get
\begin{align*}
\int_u^\infty \int_0^{t-u} \frac{y}{\sqrt{2\pi x^3 (t-x)^3}} \e^{-\frac{y^2}{2x}} \D x \D t
=& \int_0^\infty \int_{x+u}^\infty \frac{y}{\sqrt{2\pi x^3 (t-x)^3}} \e^{-\frac{y^2}{2x}} \D t \D x
= \int_0^\infty\frac{2y}{\sqrt{2\pi x^3 u}} \e^{-\frac{y^2}{2x}} \D x\\
=& \int_0^\infty\frac{2y z^3}{\sqrt{2\pi y^6 u}} \e^{-\frac{z^2}{2}} \frac{2y^2}{z^3}\D z
= \frac{2}{\sqrt{u}} \int_0^\infty\frac{2}{\sqrt{2\pi}} \e^{-\frac{z^2}{2}}\D z
= \frac{2}{\sqrt{u}}
\end{align*}
proving $$\frac{\D}{\D u} \left(\int_0^u \frac{1}{\sqrt{t}} \e^{-\frac{y^2}{2t}} \D t + \int_u^\infty q(t,u) \frac{1}{\sqrt{t}} \e^{-\frac{y^2}{2t}} \D t\right) = \frac{\D}{\D u} 4 \sqrt{u}$$
for all $u \gne 0$.
\end{proof}

\section{Convergence of the Distribution of the Last Zero}
\label{Sec4}
Recall that $$g_T:= \max\{s \in [0,T]: B_s=0\}$$ denotes the last zero of $B$ before time $T \gne 0$ with the convention $\max\emptyset:=0$. As already mentioned in the introduction, the following result is the key part of the proof of Theorem~\ref{thm1}:
\begin{prop}
\label{gConv}
For $T \to \infty$, the probability measures $\p(g_T \in \cdot\, \,\abs\, \Gamma_T \le 1)$ converge to the law of $g$ in total variation.
\end{prop}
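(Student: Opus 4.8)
The plan is to determine the sub-probability measure $\p(g_T\in\cdot\,,\Gamma_T\le1)$ explicitly via a last-exit decomposition of~$B$ at~$g_T$, then to pin down the exact rate at which $\p(\Gamma_T\le1)$ decays to~$0$ as $T\to\infty$ (which is where Lemma~\ref{IntLem} enters), and finally to upgrade the ensuing pointwise convergence of conditional densities to convergence in total variation by a Scheffé-type argument.

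For the first step I would use that, conditionally on $\{g_T=x\}$ with $x\gne0$, the path $(B_s)_{s\in[0,x]}$ is a Brownian bridge of length~$x$ from~$y$ to~$0$ — so that its occupation time below~$0$ has distribution function $q^y(x,\cdot)$ as in~\eqref{qDef} — and is independent of the final incomplete excursion $(B_{x+s})_{0\le s\le T-x}$, which has length exceeding $T-x$ and is positive or negative with probability~$\tfrac12$ each. On $\{\text{the last excursion is positive}\}$ one has $\Gamma_T=\int_0^x\1_{\{B_s\lne0\}}\D s$, while on $\{\text{the last excursion is negative}\}$ one has $\Gamma_T=\int_0^x\1_{\{B_s\lne0\}}\D s+(T-x)$. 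Combined with the last-exit density of~$g_T$ — whose normalisation, equivalent to $n(\zeta\gne s)=\sqrt{2/(\pi s)}$ for the Itô excursion measure~$n$, is fixed by comparison with the arcsine law in the case $y=0$ — this gives
\begin{equation*}
\p(g_T\in\D x,\Gamma_T\le1)=\frac{\e^{-\frac{y^2}{2x}}}{\sqrt{2\pi x}}\sqrt{\frac{2}{\pi(T-x)}}\cdot\frac{q^y(x,1)+q^y(x,1-(T-x))}{2}\,\D x,\qquad x\in(0,T),
\end{equation*}
with the convention $q^y(x,u):=0$ for $u\lne0$, together with an atom $\p(g_T=0,\Gamma_T\le1)=\1_{\{y\gne0\}}(2\Phi(y/\sqrt T)-1)$ at~$0$: for $y\gne0$ the event $\{g_T=0\}$ means $B$ stays positive up to time~$T$, hence $\Gamma_T=0$, whereas for $y\le0$ it forces $\Gamma_T=T\gne1$. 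A rigorous derivation of the displayed identity can proceed either through Itô's excursion theory or, more elementarily, by conditioning on $\FF_x$, applying the reflection principle to the post-$x$ process, and differentiating in~$x$.

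The second step integrates this identity over~$x$. For $x\lne T-1$ the term $q^y(x,1-(T-x))$ vanishes, and $q^y(t,1)=1$ for $t\le1$ by~\eqref{q=1}, so the behaviour of $\p(\Gamma_T\le1)$ is governed by $\frac{1}{2\pi}\int_0^{T}\frac{q^y(x,1)}{\sqrt{x(T-x)}}\e^{-\frac{y^2}{2x}}\,\D x$; since $\sqrt T/\sqrt{T-x}\to1$ for fixed~$x$ and $q^y(x,1)\le\mathrm{const}/x$ by~\eqref{qFormEqn}, dominated convergence on $(0,T/2)$ and crude bounds on the remaining ranges (using $q^y(T-s,1-s)=O(1/T)$ for $x\in(T-1,T)$) yield
\begin{equation*}
\sqrt T\,\p(\Gamma_T\le1)\ \longrightarrow\ \1_{\{y\gne0\}}\frac{2y}{\sqrt{2\pi}}+\frac{1}{2\pi}\int_0^\infty\frac{q^y(t,1)}{\sqrt t}\e^{-\frac{y^2}{2t}}\,\D t ,
\end{equation*}
where I also used $\sqrt T(2\Phi(y/\sqrt T)-1)\to 2y/\sqrt{2\pi}$ for the atom. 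By Lemma~\ref{IntLem} with $u=1$ (again with $q^y(t,1)=1$ on $[0,1]$) the integral equals $2\int_0^1\frac1{\sqrt t}\e^{-\frac{y^2}{2t}}\,\D t$ for $y\lne0$ and~$4$ for $y\gne0$, so the limit is a positive constant, and it is exactly the denominator appearing in~\eqref{gForm}. Dividing the density in the displayed identity by $\p(\Gamma_T\le1)$ and invoking $\sqrt T/\sqrt{T-x}\to1$ once more, the conditional density of~$g_T$ given $\{\Gamma_T\le1\}$ converges pointwise on $(0,\infty)$ to the density of~$g$ obtained from~\eqref{gForm}, and the conditional atom at~$0$ converges to $\p(g=0)$.

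It remains to promote pointwise convergence of the conditional densities to convergence in total variation; here I would use Scheffé's lemma. The absolutely continuous part of $\p(g_T\in\cdot\mid\Gamma_T\le1)$ has total mass $1-\p(g_T=0\mid\Gamma_T\le1)$, which converges to $1-\p(g=0)$; by Corollary~\ref{gFinite} this equals the integral of the limiting density, so no mass is lost provided none escapes to~$0$ or~$\infty$. Escape to~$0$ is excluded because $x\mapsto\e^{-y^2/(2x)}/\sqrt x$ is integrable near~$0$; escape to~$\infty$ — the genuine concern, caused by the factor $(T-x)^{-1/2}$ near the moving endpoint $x=T$ — is excluded by the same bound $q^y(x,1)\le\mathrm{const}/x$, which lets one dominate $\int_N^{T}\p(g_T\in\D x\mid\Gamma_T\le1)$ uniformly in~$T$ by a quantity vanishing as $N\to\infty$. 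Together with the convergence of the atom at~$0$, Scheffé's lemma then gives total variation convergence. I expect the principal difficulties to be, first, the clean justification of the last-exit decomposition — in particular the independence of the pre-$g_T$ bridge from the sign of the final excursion, and the exact constant in the law of~$g_T$ — and, second, the uniform control of the conditional density near the moving endpoint $x=T$ in the Scheffé step.
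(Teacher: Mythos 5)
Your proposal is correct and reaches the same three-step structure as the paper (explicit joint density, asymptotics of the normalising constant, Scheffé), but two of the sub-arguments are genuinely different from the paper's.

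\textbf{Density computation.} You obtain the joint density $\p(g_T\in\D x,\Gamma_T\le1)$ directly from the last-exit decomposition of~$B$ at~$g_T$: the pre-$g_T$ part is a Brownian bridge, the post-$g_T$ part is a signed meander, independent of each other and of the sign. The paper instead conditions on $B_T$ to obtain a Brownian bridge of length~$T$, and then uses the first-passage-time density of that bridge (Lemma~\ref{tauForm}, from~\cite{BO99}) and integrates over the endpoint. Your route produces the cleaner closed formula with both $q^y(x,1)$ and $q^y(x,1-(T-x))$ terms at once, while the paper sidesteps the negative-excursion term from the start by working with $\p(g_T \in (0,x_0],\ \Gamma_{g_T}\le1,\ B_T\gne 0)$ for $T\gne x_0+1$. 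I verified the constants: your $\frac{1}{\sqrt{2\pi x}}\sqrt{\frac{2}{\pi(T-x)}}$ equals $\frac{1}{\pi\sqrt{x(T-x)}}$, which is exactly twice the paper's density of $\p(g_T\in\D x,\ B_T\gne0)$, as it must be by the excursion-sign symmetry. Be aware, though, that the independence of the pre-$g_T$ bridge from the excursion sign, and the density $\frac{e^{-y^2/(2x)}}{\pi\sqrt{x(T-x)}}$ of $g_T$ for a BM started at $y\neq0$, are standard but not cited in the paper; you would need to either derive them (e.g.\ reducing to $y=0$ via the first zero and the convolution identity which the paper itself uses in the proof of Lemma~\ref{IntLem}) or give a reference.

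\textbf{Normalising constant.} Your computation of $\sqrt{T}\,\p(\Gamma_T\le1)$ integrates the joint density over $x$ and invokes Lemma~\ref{IntLem} with $u=1$; I checked that this reproduces the paper's limits $\frac{1}{\pi}\int_0^1 t^{-1/2}e^{-y^2/(2t)}\D t$ for $y\lne0$ and $\frac{1}{\pi}(\sqrt{2\pi}y+2)$ for $y\gne0$. This is a genuine simplification: the paper computes these asymptotics independently via Takács's occupation-time formula~(12) in~\cite{Tak98} (together with the arcsine law for $y=0$). Your variant is more self-contained and unifies the three sign cases, at the cost of needing the dominated-convergence/crude-bound argument on the moving domain $(0,T)$; the bound you need is $q^y(x,1)\le\min\{1,C_y/x\}$, not $q^y(x,1)\le C/x$ outright (the latter fails for small~$x$), though this is a cosmetic slip.

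\textbf{Scheffé step.} This is the same as the paper's, except you add a tightness argument to rule out escape of mass to infinity. That argument is redundant: once the conditional density converges pointwise on $(0,\infty)$ and its total mass $\p(g_T\gne0\mid\Gamma_T\le1)\to\p(g\gne0)$ (the latter finite by Corollary~\ref{gFinite}), Scheffé's lemma already gives $L^1$ convergence without any separate domination. It does no harm, but can be dropped.

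In short: correct, and a legitimate alternative that trades the paper's reliance on~\cite{BO99} and~\cite{Tak98} for the standard last-exit decomposition and an internal use of Lemma~\ref{IntLem}.
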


To prove this result, we compute the asymptotics of the condition $\p(\Gamma_T \le 1)$ and of the density $$\frac{\p(g_T \in \D x, \Gamma_T \le 1)}{\D x}, \quad x \gne 0,$$ for $T \to \infty$. After considering the mass in~$0$, corresponding to the case that the process has no zero, the proposition follows from Scheffé's lemma. Regarding the asymptotics of the mentioned density, the main idea is to condition on $g_T$.
To avoid a case distinction and obtain the rather compact formula~\eqref{gForm}, we additionally condition on $B_T$.

\begin{proof}[Proof of Proposition~\ref{gConv}.]
\quad\\
\textit{Step 1:}
We start by explicitly computing the density $\frac{\p(g_T \in \D x, \Gamma_T \le 1)}{\D x}$ and its asymptotics for $x \gne 0$. Let $x_0 \ge 0$ and $T \gne x_0+1$. Moreover, let $z \gne 0$ and let $b^z=(b^{z}_t)_{t \in [0,T]}$ be a Brownian bridge of length $T$ starting in~$y$ and ending in~$z$. We define $\gamma_T^{z}:= \max\{s \in [0,T]: b^{z}_s=0\}$. Conditioned on $\gamma_T^z \gne 0$ (i.e., on the existence of a zero of $b^z$), the process $$\big(\hat{b}^{z}_s\big)_{s \in [0,1]}:= \left(\frac{1}{\sqrt{\gamma_T^{z}}}\big(b^{z}_{s\gamma_T^{z}}-y+s y\big)\right)_{s \in [0,1]}$$ is a standard Brownian bridge independent of $\gamma_T^{z}$.
Hence $(\sqrt{t}\hat{b}^{z}_{\frac{s}{t}}+y-\frac{s}{t}y)_{s \in [0,t]}$ is a Brownian bridge of length~$t$ starting in~$y$ and ending in~$0$ for each $t \gne 0$. Using $T \gne x_0+1$ in the first step, we get
\begin{align}
\label{**}
\p(g_T \in (0,x_0], \Gamma_T \le 1)
=& \p\big(g_T \in (0,x_0], \Gamma_{g_T} \le 1, B_T \gne 0\big)\notag\\
=& \int_0^\infty \p\left(\gamma_T^{z} \in (0,x_0], \int_0^{\gamma_T^{z}} \1_{\{b_s^{z} \lne 0\}} \D s \le 1\right) \p(B_T \in \D z)\notag\\
=& \int_0^\infty \p\left(\gamma_T^{z} \in (0,x_0], \gamma_T^{z} \int_0^{1} \1_{\big\{b_{s\gamma_T^{z}}^{z} \lne 0\big\}} \D s \le 1\right) \p(B_T \in \D z)\notag\\
=& \int_0^\infty \p\left(\gamma_T^{z} \in (0,x_0], \gamma_T^{z}\int_0^{1} \1_{\big\{\sqrt{\gamma_T^{z}}\hat{b}^{z}_s+y-sy \lne 0\big\}} \D s \le 1\right) \p(B_T \in \D z)\notag\\
=& \int_0^\infty \int_{(0,x_0]} \p\left(t\int_0^{1} \1_{\big\{\sqrt{t}\hat{b}^{z}_s+y-sy \lne 0\big\}} \D s \le 1\right) \p(\gamma_T^{z} \in \D t) \p(B_T \in \D z)\notag\\
=& \int_0^\infty \int_{(0,x_0]} \p\left(\int_0^t \1_{\big\{\sqrt{t}\hat{b}^{z}_{\frac{s}{t}}+y-\frac{s}{t}y \lne 0\big\}} \D s \le 1\right) \p(\gamma_T^{z} \in \D t) \p(B_T \in \D z)\notag\\
=& \int_0^\infty \int_{(0,x_0]} q(t,1) \p(\gamma_T^{z} \in \D t) \p(B_T \in \D z).
\end{align}
Let $\big(\bar{b}_s^z\big)_{s \in [0,T]}$ be a Brownian bridge of length $T$ starting in~$z$ and ending in~$y$. According to Lemma~\ref{tauForm}, we have $$\p(\min\{s \in [0,T]:\bar{b}_s^z=0\} \in \D t) = \frac{z\sqrt{T}}{\sqrt{2\pi t^3(T-t)}}\e^{\frac{(z-y)^2}{2T}-\frac{y^2}{2(T-t)}-\frac{z^2}{2t}} \D t, \quad t \in (0,T)$$ and hence
$$\p(\gamma_T^{z} \in \D t) = \p(T-\min\{s \in [0,T]: \bar{b}_s^z=0\} \in \D t) = \frac{z\sqrt{T}}{\sqrt{2\pi (T-t)^3t}}\e^{\frac{(z-y)^2}{2T}-\frac{y^2}{2t}-\frac{z^2}{2(T-t)}} \D t, \quad t \in (0,T).$$ Combining this with~\eqref{**}, we obtain
\begin{align*}
\p(g_T \in (0,x_0], \Gamma_T \le 1)
=& \int_0^\infty \int_0^{x_0} q(t,1) \frac{z\sqrt{T}}{\sqrt{2\pi (T-t)^3t}}\e^{\frac{(z-y)^2}{2T}-\frac{y^2}{2t}-\frac{z^2}{2(T-t)}} \D t \frac{1}{\sqrt{2\pi T}} \e^{-\frac{(z-y)^2}{2T}} \D z\\
=& \frac{1}{2\pi}\int_0^{x_0} q(t,1) \e^{-\frac{y^2}{2t}} \int_0^\infty \frac{z}{\sqrt{t(T-t)^3}}\e^{-\frac{z^2}{2(T-t)}} \D z \D t\\
=& \frac{1}{2\pi}\int_0^{x_0} q(t,1) \e^{-\frac{y^2}{2t}} \frac{1}{\sqrt{t(T-t)}} \D t\\
=& \frac{1}{2\pi}\int_0^{x_0} q(t,1) \frac{1}{\sqrt{t\left(1-\frac{t}{T}\right)}} \e^{-\frac{y^2}{2t}} \D t\cdot \frac{1}{\sqrt{T}}.
\end{align*}
Since $x_0 \gne 0$ has been chosen arbitrarily,
we can deduce
\begin{align}
\label{AsympDens}
\frac{\p(g_T \in \D x, \Gamma_T \le 1)}{\D x}
=& \frac{1}{2\pi} q(x,1) \frac{1}{\sqrt{x\left(1-\frac{x}{T}\right)}} \e^{-\frac{y^2}{2x}} \cdot \frac{1}{\sqrt{T}} \notag\\
\sim&
\frac{1}{2\pi} q(x,1)
\frac{1}{\sqrt{x}} \e^{-\frac{y^2}{2x}} \cdot \frac{1}{\sqrt{T}}, \quad T \to \infty,\ x \gne 0.
\end{align}
\\
\textit{Step 2:} Next we compute the asymptotics of $\p(\Gamma_T \le 1)$ and, combining it with~\eqref{AsympDens}, the limit of $\frac{\p(g_T \in \D x\,\abs\, \Gamma_T \le 1)}{\D x}$. First we consider $y=0$.
Using the scaling property of $B$, Lévy's arcsine law and the definition of the derivative, we get $$\p(\Gamma_T \le 1) = \p\left(\int_0^1 \1_{\left\{B_s \lne 0\right\}} \D s \le \frac{1}{T}\right)= \frac{2}{\pi}\arcsin\left(\frac{1}{\sqrt{T}}\right) \sim \frac{2}{\pi} \arcsin'(0)\frac{1}{\sqrt{T}} = \frac{2}{\pi} \cdot \frac{1}{\sqrt{T}}, \quad T \to \infty.$$ Combining this with~\eqref{AsympDens} and using~\eqref{q=1}~and~\eqref{qFormEqn} in the second step, we deduce
\begin{align*}
\frac{\p(g_T \in \D x\,\abs\, \Gamma_T \le 1)}{\D x}
\to \frac{1}{4} q(x,1) \frac{1}{\sqrt{x}}
\stackrel{\hphantom{\eqref{gForm}}}{=}& \1_{\{x \le 1\}} \frac{1}{4\sqrt{x}}+ \1_{\{x \gne 1\}}\frac{1}{4\sqrt{x^3}}\\
\stackrel{\eqref{gForm}}{=}& \frac{\p(g \in \D x)}{\D x}, \quad T \to \infty, \ x \gne 0, \ y=0.
\end{align*}
Now let $y \lne 0$ and let $B^0=(B_t^0)_{t \ge 0}$ be a Brownian motion starting in~$0$. Using the scaling property and the symmetry of $B^0$, we obtain
\begin{align*}
\p(\Gamma_T \le 1)
=& \p\left(\int_0^T \1_{\{B_s^0 \lne -y\}} \D s \le 1\right)
= \p\left(\int_0^1 \1_{\left\{B_s^0 \lne -\frac{y}{\sqrt{T}}\right\}} \D s \le \frac{1}{T}\right)\\
=& \p\left(\int_0^1 \1_{\left\{B_s^0 \gne -\frac{y}{\sqrt{T}}\right\}} \D s \ge 1-\frac{1}{T}\right)
= 1-\p\left(\int_0^1 \1_{\left\{B_s^0 \gne -\frac{y}{\sqrt{T}}\right\}} \D s \le 1-\frac{1}{T}\right).
\end{align*}
Applying formula~(12) of \cite{Tak98} and the substitution $z=\frac{t}{T}$, we deduce
\begin{align*}
\p(\Gamma_T \le 1)
=& \frac{1}{\pi} \int_0^{\frac{1}{T}} \frac{1}{\sqrt{z(1-z)}} \e^{-\frac{y^2}{2zT}} \D z
= \frac{1}{\pi} \int_0^1 \frac{1}{\sqrt{t(1-\frac{t}{T})}} \e^{-\frac{y^2}{2t}} \D t \cdot \frac{1}{\sqrt{T}}.
\end{align*}
The dominated convergence theorem yields
\begin{equation*}
\p(\Gamma_T \le 1) \sim \frac{1}{\pi} \int_0^1 \frac{1}{\sqrt{t}} \e^{-\frac{y^2}{2t}} \D t \cdot \frac{1}{\sqrt{T}}, \quad T \to \infty.
\end{equation*}
Combining this with~\eqref{AsympDens}, we obtain
\begin{align*}
\frac{\p(g_T \in \D x\,\abs\, \Gamma_T \le 1)}{\D x}
\to& \frac{q(x,1) \frac{1}{\sqrt{x}} \e^{-\frac{y^2}{2x}}}{2 \int_0^1 \frac{1}{\sqrt{t}} \e^{-\frac{y^2}{2t}} \D t}
\stackrel{\eqref{gForm}}{=} \frac{\p(g \in \D x)}{\D x}, \quad T \to \infty, \ x \gne 0, \ y \lne 0.
\end{align*}
Finally, we consider $y \gne 0$. Since $B^0$ is symmetric, we have
\begin{align*}
\p(\Gamma_T \le 1)
=& \p\left(\int_0^T \1_{\{B_s^0 \lne -y\}} \D s \le 1\right)
= \p\left(\int_0^1 \1_{\left\{B_s^0 \lne -\frac{y}{\sqrt{T}}\right\}} \D s \le \frac{1}{T}\right)
= \p\left(\int_0^1 \1_{\left\{B_s^0 \gne \frac{y}{\sqrt{T}}\right\}} \D s \le \frac{1}{T}\right)
\end{align*}
Using formula~(12) of \cite{Tak98} and the density of the arcsine distribution, we deduce
\begin{align}
\label{EnumPos1}
\p(\Gamma_T \le 1)
=& 1-\frac{1}{\pi} \int_0^{1-\frac{1}{T}} \frac{1}{\sqrt{z(1-z)}} \e^{-\frac{y^2}{2zT}} \D z
= \frac{1}{\pi} \int_0^{1} \frac{1}{\sqrt{z(1-z)}} \D z - \frac{1}{\pi} \int_0^{1-\frac{1}{T}} \frac{1}{\sqrt{z(1-z)}} \e^{-\frac{y^2}{2zT}} \D z\notag\\
=& \frac{1}{\pi}\left(\int_0^{1-\frac{1}{T}} \frac{1}{\sqrt{z(1-z)}} \left(1-\e^{-\frac{y^2}{2zT}}\right) \D z + \int_{1-\frac{1}{T}}^{1} \frac{1}{\sqrt{z(1-z)}} \D z\right).
\end{align}
Recalling $\arcsin'(0)=1$, the definition of the derivative yields
\begin{equation}
\label{lim1}
\sqrt{T}\int_{1-\frac{1}{T}}^{1} \frac{1}{\sqrt{z(1-z)}} \D z = \sqrt{T}\int_0^{\frac{1}{T}} \frac{1}{\sqrt{(1-z)z}} \D z = 2 \sqrt{T}\arcsin\left(\frac{1}{\sqrt{T}}\right) \to 2, \quad T \to \infty.
\end{equation}
To obtain the asymptotics of the first term in~\eqref{EnumPos1}, we split the integral at $\frac{1}{2}$. Noting $$1-\e^{-\frac{y^2}{2zT}} \le 1- \e^{-\frac{y^2}{T}} \le \frac{y^2}{T}, \quad z \in \left[\frac{1}{2}, 1\right],$$ the dominated convergence theorem implies
\begin{equation}
\label{lim2}
\sqrt{T} \int_{\frac{1}{2}}^{1-\frac{1}{T}} \frac{1}{\sqrt{z(1-z)}} \left(1-\e^{-\frac{y^2}{2zT}}\right) \D z \to 0, \quad T \to \infty.
\end{equation}
Using the substitution $t=\frac{y^2}{x^2}$ and integration by parts, we get
\begin{align*}
\int_0^\infty \frac{1}{\sqrt{t}} \left(1-\e^{-\frac{y^2}{2t}}\right) \D t
=& \int_0^\infty \frac{2y}{x^2} \left(1-\e^{-\frac{x^2}{2}}\right)\D x
= \sqrt{2\pi} y.
\end{align*}
In particular, we have
\begin{align*}
\int_0^\infty \sup_{T \ge 1} \left\abs\frac{1}{\sqrt{t(1-\frac{t}{T})}} \left(1-\e^{-\frac{y^2}{2t}} \right)\1_{\left\{t \le \frac{T}{2}\right\}}\right\abs \D t
\le& \int_0^\infty \sup_{T \ge 1} \left\abs \frac{\sqrt{2}}{\sqrt{t}} \left(1-\e^{-\frac{y^2}{2t}}\right)\1_{\left\{t \le \frac{T}{2}\right\}}\right\abs \D t\\
=& \int_0^\infty \frac{\sqrt{2}}{\sqrt{t}} \left(1-\e^{-\frac{y^2}{2t}}\right) \D t
\lne \infty
\end{align*}
so that we can apply the dominated convergence theorem: Together with the substitution $z= \frac{t}{T}$, we get
\begin{align}
\label{lim3}
\sqrt{T}\int_0^{\frac{1}{2}} \frac{1}{\sqrt{z(1-z)}} \left(1-\e^{-\frac{y^2}{2zT}}\right) \D z
=& \int_0^{\frac{T}{2}} \frac{1}{\sqrt{t(1-\frac{t}{T})}} \left(1-\e^{-\frac{y^2}{2t}}\right) \D t \to \sqrt{2\pi} y, \quad T \to \infty.
\end{align}
In view of the three limits \eqref{lim1}, \eqref{lim2} and \eqref{lim3}, equation~\eqref{EnumPos1} implies
\begin{equation}
\label{AsympGammaPos}
\p(\Gamma_T \le 1)
\sim \frac{1}{\pi} (\sqrt{2\pi} y+2)\frac{1}{\sqrt{T}}, \quad T \to \infty.
\end{equation}
Combining this with~\eqref{AsympDens}, we can deduce
\begin{align*}
\frac{\p(g_T \in \D x\,\abs\, \Gamma_T \le 1)}{\D x}
\to& \frac{q(x,1) \frac{1}{\sqrt{x}} \e^{-\frac{y^2}{2x}}}{2\sqrt{2\pi} y+4}
\stackrel{\eqref{gForm}}{=} \frac{\p(g \in \D x)}{\D x}, \quad T \to \infty, \ x \gne 0, \ y \gne 0.
\end{align*}
\\
\textit{Step 3:} Based on the results of step 2 and taking account of the mass in~$0$ in the case $y \gne 0$, we finally prove the claimed weak convergence. As a consequence of $T \gne 1$, the reflection principle and the dominated convergence theorem, we have
\begin{align*}
\p(g_T =0, \Gamma_T \le 1) =& \p(B_t \gne 0 \text{ for all } t \in [0,T])
= \p(B_t^0 \lne y \text{ for all } t \in [0,T])\\
=& \p(\abs B_T^0\abs \lne y) =  \1_{\{y \gne 0\}}\frac{2}{\sqrt{2\pi }}\int_{0}^y \e^{-\frac{z^2}{2T}} \D z \cdot \frac{1}{\sqrt{T}}
\sim  \1_{\{y \gne 0\}}\frac{2}{\sqrt{2\pi}} y \cdot \frac{1}{\sqrt{T}}, \quad T \to \infty.\notag
\end{align*}
Combining this with~\eqref{AsympGammaPos}, we obtain
\begin{align}
\label{gT=0}
\p(g_T =0\,\abs\, \Gamma_T \le 1)
\to  \1_{\{y \gne 0\}}\frac{\sqrt{2\pi}y}{\sqrt{2\pi}y+2} \stackrel{\eqref{gForm}}{=} \p(g=0), \quad T \to \infty.
\end{align}
Recalling $\p(g \lne \infty) =1$, we deduce
\begin{align}
\int_{(0,\infty)} \frac{\p(g_T \in \D x\,\abs\, \Gamma_T \le 1)}{\D x} \D x = \p(g_T \gne 0\,\abs\, \Gamma_T \le 1)
\to  \p(g\gne 0) = \int_{(0,\infty)} \frac{\p(g \in \D x)}{\D x} \D x, \quad T \to \infty.\notag
\end{align}
Since we have already proved
\begin{align*}
\frac{\p(g_T \in \D x\,\abs\, \Gamma_T \le 1)}{\D x}
\to
\frac{\p(g \in \D x)}{\D x}, \quad T \to \infty, \ x \gne 0,
\end{align*}
Scheffé's lemma implies that the restriction of $\p(g_T \in \cdot \,\,\abs\, \Gamma_T \le 1)$ to $((0,\infty),\BB((0,\infty)))$ converges in total variation to the corresponding restriction of the law of $g$ for $T \to \infty$. Combining this with~\eqref{gT=0} yields the claim.
\end{proof}

\section{Proof of the Weak Convergence}
\label{Sec5}
To prove the weak convergence claimed in Theorem~\ref{thm1}, we will (additionally) condition on $g_T$, the last zero before $T$. In view of Proposition~\ref{gConv}, the following result guarantees that it essentially suffices to show the weak convergence of the conditioned process.
\begin{prop}
\label{IntConv}
Let $\mu_\infty,\mu_1,\mu_2,\dots$ be probability measures on a measurable space $(\Omega,\FF)$ such that $(\mu_n)_{n \in \N}$ converges to $\mu_\infty$ in total variation. Given a metric space $E$, let $\nu_\infty,\nu_1,\nu_2,\dots$ be Markov kernels from $(\Omega,\FF)$ to $(E,\BB(E))$ satisfying $\nu_n(t,\,\cdot\,) \Rightarrow \nu_\infty(t,\,\cdot\,)$ for $n \to \infty$ and $\mu_\infty$-almost every $t \in \Omega$. Then we have $$\int_{\Omega} \nu_n(t,\,\cdot\,) \mu_n(\D t) \Rightarrow \int_{\Omega} \nu_\infty(t,\,\cdot\,) \mu_\infty(\D t), \quad n \to \infty.$$
\end{prop}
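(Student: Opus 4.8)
The plan is to reduce everything to integrals of test functions. Write $P_n := \int_\Omega \nu_n(t,\cdot)\,\mu_n(\D t)$ for $n \in \N$ and $P_\infty := \int_\Omega \nu_\infty(t,\cdot)\,\mu_\infty(\D t)$; these are genuine probability measures on $(E,\BB(E))$ since the $\nu_n$ and $\nu_\infty$ are Markov kernels. By the definition of weak convergence it suffices to prove $\int_E f\,\D P_n \to \int_E f\,\D P_\infty$ for an arbitrary bounded continuous $f\colon E \to \R$; fix such an $f$ and put $M := \|f\|_\infty$. Setting $F_n(t) := \int_E f\,\D\nu_n(t,\cdot)$ and $F_\infty(t) := \int_E f\,\D\nu_\infty(t,\cdot)$, the kernel property makes $F_n$ and $F_\infty$ bounded (by $M$) measurable functions on $\Omega$, and Fubini's theorem gives $\int_E f\,\D P_n = \int_\Omega F_n\,\D\mu_n$ and likewise for $P_\infty$.

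Next I would split
$$\int_\Omega F_n\,\D\mu_n - \int_\Omega F_\infty\,\D\mu_\infty = \int_\Omega (F_n - F_\infty)\,\D\mu_\infty + \int_\Omega F_n\,\D(\mu_n - \mu_\infty)$$
and handle the two terms separately. For the first term, the hypothesis $\nu_n(t,\cdot) \Rightarrow \nu_\infty(t,\cdot)$, applied to the bounded continuous function $f$, gives $F_n(t) \to F_\infty(t)$ for $\mu_\infty$-almost every $t$; since $|F_n - F_\infty| \le 2M$ and $\mu_\infty$ is a probability measure, the dominated convergence theorem yields $\int_\Omega (F_n - F_\infty)\,\D\mu_\infty \to 0$. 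For the second term, I would use that for every bounded measurable $h$ on $\Omega$ one has $\big|\int_\Omega h\,\D(\mu_n - \mu_\infty)\big| \le 2\|h\|_\infty \sup_{A \in \FF}|\mu_n(A) - \mu_\infty(A)|$; applying this to $h = F_n$ (uniformly bounded by $M$) and invoking the total-variation convergence $\mu_n \to \mu_\infty$ shows that the second term also tends to $0$. Combining the two estimates gives $\int_E f\,\D P_n \to \int_E f\,\D P_\infty$, which is the asserted weak convergence.

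The only delicate point is the second term: we have convergence of the $F_n$ only $\mu_\infty$-almost everywhere, yet we must integrate the \emph{varying} functions $F_n$ against the \emph{varying} measures $\mu_n$. This is precisely where the assumed total-variation (as opposed to merely weak) convergence of $\mu_n$ enters — the displayed bound is uniform over all functions bounded by a fixed constant, so it absorbs the $n$-dependence of $F_n$; weak convergence of the $\mu_n$ alone would not suffice, as the $F_n$ need not be continuous nor independent of $n$. No topological assumption on $E$ beyond being a metric space is used, since the whole argument runs through the test-function characterisation of weak convergence. A minor routine check best done first is Fubini's identity $\int_E f\,\D P_n = \int_\Omega F_n\,\D\mu_n$, which follows from the definition of $P_n$ by the usual passage from indicator functions to bounded measurable $f$.
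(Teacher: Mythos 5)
Your proof is correct, and the core idea coincides with the paper's: decompose the difference $\int_\Omega F_n\,\D\mu_n - \int_\Omega F_\infty\,\D\mu_\infty$ (or the analogous quantity) into a ``change of measure'' piece, controlled by the total-variation convergence $\|\mu_n-\mu_\infty\|_{\mathrm{TV}}\to 0$ together with the uniform bound on the kernel integrals, and a ``change of kernel'' piece, controlled by the $\mu_\infty$-a.e.\ pointwise weak convergence of $\nu_n(t,\cdot)$ together with a bounded-convergence argument. The difference is cosmetic: you work with the bounded-continuous test-function characterization of weak convergence and apply the dominated convergence theorem, obtaining a genuine limit $\int_E f\,\D P_n \to \int_E f\,\D P_\infty$ directly. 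The paper instead works with the closed-set formulation of the Portmanteau theorem, replacing $f$ by $\1_F$ for $F$ closed, applies Fatou's lemma for the $\limsup$, uses $\limsup_n \nu_n(t,F)\le \nu_\infty(t,F)$, and then invokes Portmanteau again at the end to conclude. Your route avoids the double use of Portmanteau and the one-sided $\limsup$ bookkeeping; the paper's route avoids introducing the auxiliary functions $F_n$ and the Fubini step. Both are valid and of essentially the same length, so there is no gap to flag.
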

\begin{proof}
Let $F \subseteq E$ be closed. For each $n \in \N$, we have
\begin{align*}
\int_{\Omega} \nu_n(t,F) \mu_n(\D t)
\le& \int_{\Omega}  \nu_n(t,F)\, \abs \mu_n-\mu_\infty\abs(\D t)+ \int_{\Omega} \nu_n(t,F) \mu_\infty(\D t)\\
\le& \abs \mu_\infty-\mu_n\abs(\Omega)+ \int_{\Omega} \nu_n(t,F) \mu_\infty(\D t),
\end{align*}
where $\abs \mu_\infty- \mu_n\abs$ denotes the variation of the signed measure $\mu_\infty- \mu_n$. Applying Fatou's lemma and the Portmanteau theorem, we deduce
\begin{align*}
\limsup_{n \to \infty} \int_{\Omega} \nu_n(t,F) \mu_n(\D t)
\le& \lim_{n \to \infty} \abs \mu_\infty-\mu_n\abs(\Omega)+ \limsup_{n \to \infty} \int_{\Omega} \nu_n(t,F) \mu_\infty(\D t)\\
\le& 0+\int_{\Omega} \limsup_{n \to \infty} \nu_n(t,F) \mu_\infty(\D t)
\le \int_{\Omega} \nu_\infty(t,F) \mu_\infty(\D t)
\end{align*}
so that the Portmanteau theorem yields the claim.
\end{proof}

Around $g_T$, we can decompose $(B_t)_{t \in [0,T]}$ into a scaled Brownian bridge with drift and, up to sign, a scaled Brownian meander.
Proposition~\ref{AZconv} below shows that a scaled Brownian meander of infinite length is nothing but a three-dimensional Bessel process starting in $0$. Recall that $Z$ is such a process. Here and in what follows, weak convergence of stochastic processes is always understood as weak convergence in the space of continuous functions with the topology of locally uniform convergence and the corresponding Borel $\sigma$-algebra.
\begin{prop}
\label{AZconv}
Let $t_0 \gne 0$ and $x \in [0,t_0]$. Moreover, let $(A_s)_{s \in [0,1]}$ be a Brownian meander. Then we have $$\left(\sqrt{T-x} A_{\frac{t}{T-x}}\right)_{t \in [0,t_0-x]} \Rightarrow (Z_t)_{t \in [0,t_0-x]}, \quad T \to \infty.$$
\end{prop}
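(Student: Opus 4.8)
The plan is to recast the statement as an absolute‑continuity comparison between the Brownian meander and the Bessel process on a time window that shrinks to a point, and then let that window shrink. Write $u:=T-x$, so $u\to\infty$ as $T\to\infty$, and assume $t_0-x\gne0$ (for $x=t_0$ both processes are the constant path $0$ and there is nothing to prove). Since the three-dimensional Bessel process started in $0$ is invariant under $(\,\cdot\,)\mapsto(\sqrt u\,Z_{(\cdot)/u})$, the target process $(Z_t)_{t\in[0,t_0-x]}$ has the same law as $(\sqrt u\,Z_{t/u})_{t\in[0,t_0-x]}$; thus I only have to compare the scaled meander $(\sqrt u A_{t/u})_{t\in[0,t_0-x]}$ with a \emph{scaled} Bessel process, both being blow-ups of a window $[0,(t_0-x)/u]$ of vanishing length.

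The first step is to recall the classical finite-dimensional densities. Writing $p^+_t(a,b):=\frac{1}{\sqrt{2\pi t}}\bigl(\e^{-(b-a)^2/(2t)}-\e^{-(b+a)^2/(2t)}\bigr)$ and $h_t(b):=\frac{2b}{t\sqrt{2\pi t}}\e^{-b^2/(2t)}=\lim_{a\downarrow0}p^+_t(a,b)/a$ for $a,b\gne0$, and $\Psi_w(r):=2\Phi(r/\sqrt w)-1$ for $r,w\gne0$, one has for $0\lne s_1\lne\cdots\lne s_k\lne1$
\[
\p(A_{s_1}\in\D x_1,\dots,A_{s_k}\in\D x_k)=\sqrt{\pi/2}\;h_{s_1}(x_1)\Bigl(\prod_{i=2}^k p^+_{s_i-s_{i-1}}(x_{i-1},x_i)\Bigr)\Psi_{1-s_k}(x_k)\,\D x,
\]
\[
\p(Z_{s_1}\in\D x_1,\dots,Z_{s_k}\in\D x_k)=h_{s_1}(x_1)\Bigl(\prod_{i=2}^k p^+_{s_i-s_{i-1}}(x_{i-1},x_i)\Bigr)x_k\,\D x .
\]
Dividing these and invoking a monotone-class argument, I get that for each $v\in(0,1)$ the law of $(A_s)_{s\in[0,v]}$ on $\CC([0,v])$ is absolutely continuous with respect to the law of $(Z_s)_{s\in[0,v]}$, with density $\omega\mapsto\sqrt{\pi/2}\,\Psi_{1-v}(\omega(v))/\omega(v)$ (this is Imhof's relation).

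Next I fix $u$ large enough that $v:=(t_0-x)/u\lne1$ and transport this along the bimeasurable bijection $\CC([0,v])\to\CC([0,t_0-x])$, $\omega\mapsto(\sqrt u\,\omega(t/u))_{t\in[0,t_0-x]}$. Since the scaled meander and the scaled Bessel process are the images of $(A_s)_{s\le v}$ and $(Z_s)_{s\le v}$ under this map, and the latter has the law $\mu$ of $(Z_t)_{t\in[0,t_0-x]}$ by Bessel scaling, I obtain $\D\mu_u=D_u\,\D\mu$, where $\mu_u$ is the law of $(\sqrt u A_{t/u})_{t\in[0,t_0-x]}$ and $D_u(\omega)=\sqrt{\pi/2}\,\Psi_{1-v}\!\bigl(\omega(t_0-x)/\sqrt u\bigr)\big/\bigl(\omega(t_0-x)/\sqrt u\bigr)$. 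Under $\mu$ one has $\omega(t_0-x)/\sqrt u=Z_v\to Z_0=0$ a.s.\ and $\Psi_{1-v}(r)/r\to\sqrt{2/\pi}$ as $r,v\downarrow0$, so $D_u\to\sqrt{\pi/2}\cdot\sqrt{2/\pi}=1$ $\mu$-a.s.; and the elementary estimate $2\Phi(a)-1\le a\sqrt{2/\pi}$ ($a\ge0$) gives $D_u\le(1-v)^{-1/2}\le\sqrt2$ once $u\ge2(t_0-x)$. Dominated convergence then yields $\int F\,\D\mu_u=\int F\,D_u\,\D\mu\to\int F\,\D\mu$ for all bounded continuous $F$ on $\CC([0,t_0-x])$, i.e.\ $\mu_u\Rightarrow\mu$, which is the claim.

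The part needing care is pinning down the meander/Bessel density on $[0,v]$ (equivalently, Imhof's relation) and keeping the two scalings straight; after that the limit drops out. If one prefers to avoid absolute continuity, the same finite-dimensional formulas give, via $h_{t/u}(\cdot/\sqrt u)=u\,h_t(\cdot)$, $p^+_{t/u}(\cdot/\sqrt u,\cdot/\sqrt u)=\sqrt u\,p^+_t(\cdot,\cdot)$ and $\Psi_{1-t_n/u}(y/\sqrt u)\sim\sqrt{2/(\pi u)}\,y$, the finite-dimensional convergence $(\sqrt u A_{t_1/u},\dots,\sqrt u A_{t_n/u})\Rightarrow(Z_{t_1},\dots,Z_{t_n})$ by Scheff\'e's lemma, while $\mu_u\le\sqrt2\,\mu$ gives tightness of $(\mu_u)$; together these again imply $\mu_u\Rightarrow\mu$.
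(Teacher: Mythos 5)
Your proof is correct, and it takes a genuinely different (and arguably cleaner) route than the paper's. Both arguments rest on Imhof's absolute continuity between the Brownian meander and the Bessel(3) process, but the paper uses only the ``full window'' version on $[0,1]$, namely $\p\big((A_s)_{s \in [0,1]} \in \cdot\big) = \E\big(\tfrac{1}{Z_1}\sqrt{\pi/2}\,\1_{(Z_s)_{s\in[0,1]}\in\cdot}\big)$. After Bessel scaling this produces the density factor $\tfrac{\sqrt{T-x}}{Z_{T-x}}\sqrt{\pi/2}$, which involves $Z_{T-x}$ at a time far beyond the target window $[0,t_0-x]$; the paper then has to work around this dependence by writing $Z=\|W\|_2$ for a $3$-dimensional Brownian motion $W$, introducing the auxiliary Bessel process $Z'=\|W_{t_0-x+\cdot}-W_{t_0-x}\|_2$ independent of the window, squeezing $Z_{T-x}$ via the triangle inequality $Z'_{T-t_0}-Z_{t_0-x}\le Z_{T-x}\le Z'_{T-t_0}+Z_{t_0-x}$, and controlling remainders. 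You instead invoke the \emph{restricted} version of Imhof's relation: the law of $(A_s)_{s\le v}$ is absolutely continuous with respect to the law of $(Z_s)_{s\le v}$ with density $\sqrt{\pi/2}\,\Psi_{1-v}(\omega(v))/\omega(v)$ depending only on the endpoint $\omega(v)$, which is exactly the marginalization the paper's argument is effectively trying to accomplish by hand. After the $\sqrt u$-blow-up this gives a density $D_u$ that is both a.s.\ convergent to $1$ and uniformly bounded by $\sqrt 2$ for $u\ge 2(t_0-x)$, so a single application of dominated convergence finishes the proof; no auxiliary process, triangle inequality, or two-sided squeeze is needed. The trade-off is that you have to verify (or cite) the finite-dimensional densities of the meander and of Bessel(3), plus a monotone-class step, to get the restricted Radon--Nikodym derivative; the paper only needs the statement of Imhof's theorem as cited. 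Both approaches are sound, but yours localizes the density to the correct window from the start, which is what makes the analytic part immediate.
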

\begin{proof}
Let $f \in \CC_b(\CC([0,t_0-x]))$ and $T \gne t_0$. Section~4 of \cite{Imh84} provides the change of measure formula
\begin{equation}
\label{AZtrafo}
\p\left((A_s)_{s \in [0,1]} \in \cdot \,\right) = \E\left(\frac{1}{Z_1} \sqrt{\frac{\pi}{2}}\1_{(Z_s)_{s \in [0,1]} \in \,\cdot\,} \right).
\end{equation}
Together with the scaling property of $Z$, we get
\begin{align*}
\E\left(f\left(\left(\sqrt{T-x} A_{\frac{t}{T-x}}\right)_{t \in [0,t_0-x]}\right)\right)
=& \E\left(\frac{1}{Z_1} \sqrt{\frac{\pi}{2}} f\left(\left(\sqrt{T-x} Z_{\frac{t}{T-x}}\right)_{t \in [0,t_0-x]}\right)\right)\\
=& \E\left(\frac{\sqrt{T-x}}{Z_{T-x}} \sqrt{\frac{\pi}{2}} f\left((Z_t)_{t \in [0,t_0-x]}\right)\right).
\end{align*}
Given the three-dimensional Brownian motion $(W_t)_{t \ge 0}$ with $Z= \y W\y_2$, the process $$Z':=(Z'_t)_{t \ge 0}:= (\y W_{t_0-x+t}-W_{t_0-x}\y_2)_{t \ge 0}$$ is a three-dimensional Bessel process independent of $(Z_t)_{t \in [0,t_0-x]}$. The triangle inequality implies $Z_{T-x} \le Z'_{T-t_0}+Z_{t_0-x}$. Combining this with the scaling property of $Z'$ and the dominated convergence theorem, we obtain
\begin{align*}
\E\left(\frac{\sqrt{T-x}}{Z_{T-x}} \sqrt{\frac{\pi}{2}} f\left((Z_t)_{t \in [0,t_0-x]}\right)\right)
\ge & \E\left(\frac{\sqrt{T-x}}{Z'_{T-t_0}+Z_{t_0-x}} \sqrt{\frac{\pi}{2}} f\left((Z_t)_{t \in [0,t_0-x]}\right)\right)\\
=& \E\left(\frac{\sqrt{T-x}}{\sqrt{T-t_0}Z'_{1}+Z_{t_0-x}} \sqrt{\frac{\pi}{2}} f\left((Z_t)_{t \in [0,t_0-x]}\right)\right)\\
=& \frac{\sqrt{T-x}}{\sqrt{T-t_0}} \E\left(\frac{1}{1+\frac{Z_{t_0-x}}{\sqrt{T-t_0}Z'_1}}\cdot \frac{1}{Z_1'}\sqrt{\frac{\pi}{2}} f\left((Z_t)_{t \in [0,t_0-x]}\right)\right)\\
\to& \E\left(\frac{1}{Z_1'}\sqrt{\frac{\pi}{2}} f\left((Z_t)_{t \in [0,t_0-x]}\right)\right), \quad T \to \infty.
\end{align*}
On the other hand, the triangle inequality also implies $Z_{T-x} \ge Z'_{T-t_0}-Z_{t_0-x}$. Together with $\lim_{T \to \infty} Z'_{T-t_0} = \infty$ a.s., a similar computation yields $$\limsup_{T \to \infty} \E\left(\1_{\{2Z_{t_0-x}\le Z'_{T-t_0}\}} \frac{\sqrt{T-x}}{Z_{T-x}} \sqrt{\frac{\pi}{2}} f\left((Z_t)_{t \in [0,t_0-x]}\right)\right) \le \E\left(\frac{1}{Z_1'}\sqrt{\frac{\pi}{2}} f\left((Z_t)_{t \in [0,t_0-x]}\right)\right).$$ Using $Z_{T-x} \le Z'_{T-t_0}+Z_{t_0-x}$ again, we get
\begin{align*}
0 \le& \E\left(\1_{\{2Z_{t_0-x}\gne Z'_{T-t_0}\}} \frac{\sqrt{T-x}}{Z_{T-x}} \sqrt{\frac{\pi}{2}} f\left((Z_t)_{t \in [0,t_0-x]}\right)\right)
\le \sqrt{\frac{\pi}{2}}\y f\y_\infty\E\left(\1_{\{3Z_{t_0-x}\gne Z_{T-x}\}} \frac{\sqrt{T-x}}{Z_{T-x}}\right)\\
=&  \sqrt{\frac{\pi}{2}}\y f\y_\infty \E\left(\1_{\Big\{3Z_{\frac{t_0-x}{T-x}} \gne Z_1\Big\}} \frac{1}{Z_{1}}\right)
\to  \sqrt{\frac{\pi}{2}}\y f\y_\infty\E\left(\1_{\{0 \gne Z_1\}} \frac{1}{Z_{1}}\right)
= 0, \quad T \to \infty.
\end{align*}
As a consequence of~\eqref{AZtrafo}, we have $\E\big(\frac{1}{Z_1'}\sqrt{\frac{\pi}{2}}\big)=1$. Using this fact as well as the independence of $Z'$ and $(Z_t)_{t \in [0,t_0-x]}$ in the final step, we can deduce
\begin{align*}
\E\left(f\left(\left(\sqrt{T-x} A_{\frac{t}{T-x}}\right)_{t \in [0,t_0-x]}\right)\right)
=& \E\left(\frac{\sqrt{T-x}}{Z_{T-x}} \sqrt{\frac{\pi}{2}} f\left((Z_t)_{t \in [0,t_0-x]}\right)\right)\\
\to& \E\left(\frac{1}{Z_1'}\sqrt{\frac{\pi}{2}} f\left((Z_t)_{t \in [0,t_0-x]}\right)\right)
= \E\left(f\left((Z_t)_{t \in [0,t_0-x]}\right)\right), \quad T \to \infty,
\end{align*}
proving the claim.
\end{proof}

Finally, we are ready to prove Theorem~\ref{thm1}. In view of the topological structure of $\CC([0,\infty))$, it suffices to prove weak convergence in $\CC([0,t_0])$ for each $t_0 \gne 0$. If there exists a zero before time $T \gne t_0$, we use the mentioned path decomposition
and the two auxiliary results we just proved. If there is no zero before $T$, we simply use the fact that Brownian motion conditioned to stay positive is nothing but a three-dimensional Bessel process.
\begin{proof}[Proof of Theorem~\ref{thm1}.]
Let $t_0 \gne 0$ and $T \gne t_0$. Conditioned on $g_T \gne 0$, $$(b_s')_{s \in [0,1]}:=\left(\frac{1}{\sqrt{g_T}} \big(B_{sg_T}-y+s y\big)\right)_{s \in [0,1]}$$ is a standard Brownian bridge and $$(A_s)_{s \in [0,1]}:=\left(\frac{1}{\sqrt{T-g_T}} \big\abs B_{g_T+(T-g_T)s}\big\abs \right)_{s \in [0,1]}$$ is a Brownian meander such that the two processes, $g_T$ and the sign of $B_T$ are mutually independent.
Rewriting the definitions, we get $$(B_t)_{t \in [0,T]} = \left(\1_{\{t \lne g_T\}} \left(\sqrt{g_T}b'_{\frac{t}{g_T}}+y-\frac{t}{g_T}y\right)+ \1_{\{t \ge g_T\}} \op{sgn}(B_T)\sqrt{T-g_T} A_{\frac{t-g_T}{T-g_T}} \right)_{t \in [0,T]}.$$ Now let $x \gne 0$ and $T \ge x+1$. Using this assumption in the first step, Proposition~\ref{AZconv} in the forth and the mentioned independence at several points, we obtain
\begin{align*}
&\p\left((B_t)_{t \in [0,t_0]} \in \cdot \,\, \big\abs\, \Gamma_T \le 1, g_T=x\right) \\
=& \p\left((B_t)_{t \in [0,t_0]} \in \cdot \,\, \big\abs\, \Gamma_x \le 1, B_T \gne 0, g_T=x\right)\\
=& \p\left(\left(\1_{\{t \lne x\}} \left(\sqrt{x} b'_{\frac{t}{x}}+y-\frac{t}{x}y\right) + \1_{\{t \ge x\}} \sqrt{T-x} A_{\frac{t-x}{T-x}} \right)_{t \in [0,t_0]} \in \cdot \,\, \right\abs\ldots\\
&\hspace{97pt}\ldots
\left\abs\, \int_0^{x} \1_{\left\{\sqrt{x} b'_{\frac{s}{x}}+y-\frac{s}{x}y \lne 0\right\}} \D s \le 1, B_T \gne 0, g_T=x \right)\\
=& \p\left(\left(\1_{\{t \lne x\}} \left(\sqrt{x} b'_{\frac{t}{x}}+y-\frac{t}{x}y\right) + \1_{\{t \ge x\}} \sqrt{T-x} A_{\frac{t-x}{T-x}} \right)_{t \in [0,t_0]} \in \cdot \,\, \Bigg\abs\, \int_0^{x} \1_{\left\{\sqrt{x} b'_{\frac{s}{x}}+y-\frac{s}{x}y \lne 0\right\}} \D s \le 1 \right)\\
\Rightarrow& \p\left(\left(\1_{\{t \lne x\}} \left(\sqrt{x} b'_{\frac{t}{x}}+y-\frac{t}{x}y\right)+ \1_{\{t \ge x\}} Z_{t-x} \right)_{t \in [0,t_0]} \in \cdot \,\, \Bigg\abs\, \int_0^{x} \1_{\left\{\sqrt{x} b'_{\frac{s}{x}}+y-\frac{s}{x}y \lne 0\right\}} \D s \le 1 \right)\\
=& \p\left(\left(\1_{\{t \lne x\}} \left(\sqrt{x}b_{\frac{t}{x}}+y-\frac{t}{x}y\right)+ \1_{\{t \ge x\}} Z_{t-x} \right)_{t \in [0,t_0]} \in \cdot \,\,\Bigg\abs\, g=x \right)\\
=& \p\left((X_t)_{t \in [0,t_0]} \in \cdot \,\,\big\abs\, g=x \right),
\quad T \to \infty.
\end{align*}
Given $y \gne 0$, we additionally have (see for instance Example 3 in \cite{Pin85})
\begin{align*}
\p\left((B_t)_{t \in [0,t_0]} \in \cdot \,\, \big\abs\, \Gamma_T \le 1, g_T=0\right)
&= \hspace{2.4pt}\p\left((B_t)_{t \in [0,t_0]} \in \cdot\, \, \big\abs\, B_t \gne 0 \text{ for all } t \in [0,T] \right)\\
&\Rightarrow \p\left((Y_t)_{t \in [0,t_0]} \in \cdot\,\right)
=  \p\big((X_t)_{t \in [0,t_0]} \in \cdot\,\big\abs \, g=0\big), \quad T \to \infty.
\end{align*}
In view of Proposition~\ref{gConv} and of $\p(g=0)=0$ for $y \le 0$, Proposition~\ref{IntConv} yields
\begin{align*}
\p\left((B_t)_{t \in [0,t_0]} \in \cdot \,\, \big\abs\, \Gamma_T \le 1\right)
&=\hspace{2.4pt} \int_{[0,\infty)} \p\left((B_t)_{t \in [0,t_0]} \in \cdot \,\, \big\abs\, \Gamma_T \le 1, g_T=x\right) \p(g_T \in \D x\,\abs\, \Gamma_T \le 1)\\
&\Rightarrow \int_{[0,\infty)} \p\left((X_t)_{t \in [0,t_0]} \in \cdot \,\,\big\abs\, g=x \right) \p(g \in \D x)\\
&=\hspace{2.4pt} \p\left((X_t)_{t \in [0,t_0]} \in \cdot \,\right), \quad T \to \infty.
\end{align*}
Since $t_0 \gne 0$ has been chosen arbitrarily, Theorem~5 of \cite{Whi70} yields the claimed weak convergence in $\CC([0,\infty))$.
\end{proof}

\section{Proofs of the Remaining Results}
\label{Sec6}
To prove Theorem~\ref{thm2}, we condition on $g$, the last zero of the limiting process, and perform an explicit calculation using Lemma~\ref{IntLem}.
\begin{proof}[Proof of Theorem~\ref{thm2}.]
Let $u \in [0,1]$. By construction of $X$, we have $$\Gamma = \int_0^{g} \1_{\{X_s \lne 0\}} \D s = \int_0^{g} \1_{\left\{\sqrt{g} b_{\frac{s}{g}}+y-\frac{s}{g}y \lne 0\right\}} \D s.$$ Now let $(b'_s)_{s \in [0,1]}$ be a standard Brownian bridge. Using $u \le 1$ and the definition of $b$ in the second step, we obtain
\begin{align*}
\p(\Gamma \le u)
=& \p(g\le u)+\int_u^\infty \p\left(\int_0^{g} \1_{\left\{\sqrt{g} b_{\frac{s}{g}}+y-\frac{s}{g}y \lne 0\right\}} \D s \le u\,\bigg\abs\, g=x\right) \p(g \in \D x)\\
=& \p(g\le u)+ \int_u^\infty \frac{\p\Big(\int_0^{x} \1_{\big\{\sqrt{x} b'_{\frac{s}{x}}+y-\frac{s}{x}y \lne 0\big\}} \D s \le u\Big)}{\p\Big(\int_0^{x} \1_{\big\{\sqrt{x} b'_{\frac{s}{x}}+y-\frac{s}{x}y \lne 0\big\}} \D s \le 1\Big)} \p( g \in \D x)\\
=& \p(g\le u)+\int_u^\infty \frac{q(x,u)}{q(x,1)} \p(g \in \D x).
\end{align*}
Applying Lemma~\ref{IntLem} in the final step, we deduce
\begin{align*}
\p(\Gamma \le u)
\stackrel{\eqref{gForm}}{=}& \frac{\int_0^u q(x,1)\frac{1}{\sqrt{x}} \e^{-\frac{y^2}{2x}} \D x}{2\int_0^1 \frac{1}{\sqrt{x}} \e^{-\frac{y^2}{2x}} \D x} + \frac{\int_u^\infty q(x,u)\frac{1}{\sqrt{x}} \e^{-\frac{y^2}{2x}} \D x}{2\int_0^1 \frac{1}{\sqrt{x}} \e^{-\frac{y^2}{2x}} \D x}\\
\stackrel{\eqref{q=1}}{=}& \frac{\int_0^u \frac{1}{\sqrt{x}} \e^{-\frac{y^2}{2x}} \D x}{2\int_0^1 \frac{1}{\sqrt{x}} \e^{-\frac{y^2}{2x}} \D x} + \frac{\int_u^\infty q(x,u)\frac{1}{\sqrt{x}} \e^{-\frac{y^2}{2x}} \D x}{2\int_0^1 \frac{1}{\sqrt{x}} \e^{-\frac{y^2}{2x}} \D x}
= 2\frac{\int_0^u \frac{1}{\sqrt{x}} \e^{-\frac{y^2}{2x}} \D x}{2\int_0^1 \frac{1}{\sqrt{x}} \e^{-\frac{y^2}{2x}} \D x}, \quad y \lne 0.
\end{align*}
Similarly, we get
\begin{align*}
\p(\Gamma \le u)
\stackrel{\eqref{gForm}}{=}& \frac{2\sqrt{2\pi}y+\int_0^u q(x,1)\frac{1}{\sqrt{x}} \e^{-\frac{y^2}{2x}} \D x}{2\sqrt{2\pi}y+4} + \frac{\int_u^\infty q(x,u)\frac{1}{\sqrt{x}} \e^{-\frac{y^2}{2x}} \D x}{2\sqrt{2\pi}y+4}\\
\stackrel{\eqref{q=1}}{=}&  \frac{2\sqrt{2\pi}y+\int_0^u \frac{1}{\sqrt{x}} \e^{-\frac{y^2}{2x}} \D x + \int_u^\infty q(x,u)\frac{1}{\sqrt{x}} \e^{-\frac{y^2}{2x}} \D x}{2\sqrt{2\pi}y+4}
= \frac{2\sqrt{2\pi}y+4\sqrt{u}}{2\sqrt{2\pi}y+4}, \quad y \gne 0.
\end{align*}
Given $y = 0$, we can proceed as in the proof of Theorem~4 in \cite{BB11}: Applying the explicit formulas~\eqref{q=1}~and~\eqref{qFormEqn} in the second step, we obtain
\begin{align*}
\p(\Gamma \le u)
\stackrel{\eqref{gForm}}{=}& \frac{\sqrt{u}}{2} + \frac{\int_u^1 \frac{q(x,u)}{q(x,1)\sqrt{x}}\D x}{4}+ \frac{\int_1^\infty \frac{q(x,u)}{q(x,1)\sqrt{x^3}} \D x}{4}
\stackrel{\hphantom{\eqref{gForm}}}{=} \frac{\sqrt{u}}{2} + \frac{\int_u^\infty \frac{u}{\sqrt{x^3}} \D x}{4}
= \sqrt{u}
= \frac{\int_0^u \frac{1}{\sqrt{x}}\D x}{\int_0^1 \frac{1}{\sqrt{x}}\D x}, \quad y=0,
\end{align*}
showing the claim.
\end{proof}

As already mentioned, we prove Proposition \ref{gGamma} without relying explicitly on the distributions of $g$, $\tau$ and $\Gamma$ but on the arcsine laws and the strong Markov property.
\begin{proof}[Proof of Proposition~\ref{gGamma}.]
Let $u \in [0,1]$ and $T \gne 2$. Moreover, let $$\bar{\tau}:= \inf\{t \ge 0: B_t = 0\}$$ be the first zero of $B$. According to the strong Markov property, $(\bar{B}_t)_{t \ge 0}:= (B_{\bar{\tau}+t})_{t \ge 0}$ is a standard Brownian motion independent of $\bar{\tau}$. For each $t \ge 0$, we define $$\bar{g}_t:= \sup\{s \in [0,t]: \bar{B}_s = 0\} \quad \text{and} \quad \bar{\Gamma}_t:= \int_0^t \1_{\{\bar{B}_s \lne 0\}} \D s.$$ On $\{\bar{\tau} \le T\}$, we have
\begin{align}
\label{gGammabar}
g_T = \bar{\tau} + \bar{g}_{T-\bar{\tau}} \quad \text{and} \quad \Gamma_T = \1_{\{y \le 0\}} \bar{\tau} + \bar{\Gamma}_{T -\bar{\tau}} = 
\begin{cases}
\bar{\tau} + \bar{\Gamma}_{T -\bar{\tau}}, \quad& y \le 0,\\
\bar{\Gamma}_{T -\bar{\tau}}, \quad& y \ge 0.
\end{cases}
\end{align}
Using the symmetry of $\bar{B}$ in the first step and the arcsine laws in the second, we obtain
\begin{align*}
2\p(\bar{g}_{T-t} \le u-t, \bar{B}_{T-t} \gne 0) = \p(\bar{g}_{T-t} \le u-t) = \p(\bar{\Gamma}_{T-t} \le u-t), \quad t \in [0,u].
\end{align*}
Recalling $T-u \gne 1$ and that $\bar{\tau}$ is independent of both $\bar{g}$ and $\bar{\Gamma}$, we can deduce
\begin{align*}
2\p(g_T \in (0,u], \Gamma_T \le 1)
\stackrel{\hphantom{\eqref{gGammabar}}}{=}& 2\p(g_T \in (0,u], B_T \gne 0)\\
\stackrel{\hphantom{\eqref{gGammabar}}}{=}& 2\p(g_T \le u, B_T \gne 0, \bar{\tau} \le u)\\
\stackrel{\eqref{gGammabar}}{=}& 2\p(\bar{g}_{T-\bar{\tau}} \le u-\bar{\tau}, \bar{B}_{T-\bar{\tau}} \gne 0, \bar{\tau} \le u)\\
\stackrel{\hphantom{\eqref{gGammabar}}}{=}& \int_{[0,u]} 2\p(\bar{g}_{T-t} \le u-t, \bar{B}_{T-t} \gne 0) \p(\bar{\tau} \in \D t)\\
\stackrel{\hphantom{\eqref{gGammabar}}}{=}& \int_{[0,u]} \p(\bar{\Gamma}_{T-t} \le u-t) \p(\bar{\tau} \in \D t)\\
\stackrel{\hphantom{\eqref{gGammabar}}}{=}& \p(\bar{\Gamma}_{T-\bar{\tau}} \le u-\bar{\tau}, \bar{\tau} \le u).
\end{align*}
Noting $\{\Gamma_T \le u\} \subseteq \{\bar{\tau} \le u\}$ for $y \le 0$, we get
\begin{align*}
2\p(g_T \in (0,u], \Gamma_T \le 1)
=&\p(\bar{\Gamma}_{T-\bar{\tau}} \le u-\bar{\tau}, \bar{\tau} \le u)
\stackrel{\eqref{gGammabar}}{=} \p\left(\1_{\{y \gne 0\}} \bar{\tau}+ \Gamma_T \le u, \bar{\tau} \le u\right)\\
=& \p\left(\1_{\{y \gne 0\}} \bar{\tau}+\Gamma_T \le u\right)
= \p\left(\1_{\{y \gne 0\}} \bar{\tau}+ \Gamma_T \le u, \Gamma_T \le 1\right)
\end{align*}
and hence
\begin{align}
\label{*}
2\p(g_T \in (0,u]\,\abs\, \Gamma_T \le 1)
= \p\left(\1_{\{y \gne 0\}} \bar{\tau}+\Gamma_T \le u\,\big\abs\, \Gamma_T \le 1\right).
\end{align}
As a consequence of Proposition~\ref{gConv}, the left-hand side converges to $2\p(g \in (0,u])$ for $T \to \infty$.
In view of the very same proposition, a path decomposition and conditioning argument similar to that in the proof of Theorem~\ref{thm1} implies that the right-hand side of~\eqref{*} converges to $$\p\big(\1_{\{y \gne 0\}}\inf\{t \ge 0: X_t = 0\} + \Gamma \le u\big) = \p(\tau + \Gamma \le u)$$ for $T \to \infty$.
\end{proof}

Theorem~\ref{thm3} follows from the formulas for the distribution functions of $g^y$ and $\Gamma^y$ by direct computation.
\begin{proof}[Proof of Theorem~\ref{thm3}.]
\textit{Part (a):} Let $u \gne 0$ and $y \gne \frac{2}{\sqrt{u}}$. Then
\begin{align}
\label{4aprf}
\p\left(\frac{g^y}{y^2} \gne u\,\bigg\abs\, g^y \gne 0\right)
\stackrel{\eqref{gForm}}{=}& \frac{1}{4}\int_{u y^2}^\infty q^y(t,1) \frac{1}{\sqrt{t}}\e^{-\frac{y^2}{2t}} \D t\notag\\
\stackrel{\eqref{qFormEqn}}{=}& \frac{1}{4}\int_{u y^2}^\infty \int_0^{t-1} \frac{y}{\sqrt{2\pi x^3 (t-x)^3}} \e^{-\frac{y^2}{2x}} \D x+ \int_{t-1}^t \frac{y}{\sqrt{2\pi x^3 (t-x)}} \e^{-\frac{y^2}{2x}} \D x \D t
\end{align}
holds. Regarding the first double integral, we use Fubini's theorem and two linear substitutions to obtain
\begin{align*}
&\int_{u y^2}^\infty \int_0^{t-1} \frac{y}{\sqrt{2\pi x^3 (t-x)^3}} \e^{-\frac{y^2}{2x}} \D x \D t\\
=& \int_{0}^\infty \int_{uy^2 \vee (x+1)}^\infty \frac{y}{\sqrt{2\pi x^3 (t-x)^3}} \e^{-\frac{y^2}{2x}} \D t \D x\\
=& \int_{0}^{uy^2-1} \frac{2y}{\sqrt{2\pi x^3 (uy^2-x)}} \e^{-\frac{y^2}{2x}} \D x + \int_{uy^2-1}^\infty \frac{2y}{\sqrt{2\pi x^3}} \e^{-\frac{y^2}{2x}} \D x\\
=& \int_{0}^{1} \frac{2y(uy^2-1)}{\sqrt{2\pi (uy^2-1)^3z^3 (uy^2-(uy^2-1)z)}} \e^{-\frac{y^2}{2(uy^2-1)z}} \D z
+ \int_{u}^\infty \frac{2y^3}{\sqrt{2\pi (sy^2-1)^3}} \e^{-\frac{y^2}{2(sy^2-1)}} \D s\\
=& \frac{1}{y}\int_{0}^{1} \frac{2\big(u-\frac{1}{y^2}\big)}{\sqrt{2\pi \big(u-\frac{1}{y^2}\big)^3z^3 \big(u- \big(u-\frac{1}{y^2}\big)z\big)}} \e^{-\frac{1}{2\left(u-\frac{1}{y^2}\right)z}} \D z
+ \int_{u}^\infty \frac{2}{\sqrt{2\pi \big(s-\frac{1}{y^2}\big)^3}} \e^{-\frac{1}{2\left(s-\frac{1}{y^2}\right)}} \D s.
\end{align*}
Noting $\frac{3}{4} u \le s-\frac{1}{y^2} \le s$ for all $s \ge u$, the dominated convergence theorem yields $$\int_{u y^2}^\infty \int_0^{t-1} \frac{y}{\sqrt{2\pi x^3 (t-x)^3}} \e^{-\frac{y^2}{2x}} \D x \D t \xrightarrow{y \to \infty} 0+\int_{u}^\infty \frac{2}{\sqrt{2\pi s^3}} \e^{-\frac{1}{2s}}\D s.$$
Regarding the second double integral in \eqref{4aprf}, two linear substitutions lead to
\begin{align*}
\int_{u y^2}^\infty \int_{t-1}^t \frac{y}{\sqrt{2\pi x^3 (t-x)}} \e^{-\frac{y^2}{2x}} \D x \D t
=& \int_{u}^\infty \int_{sy^2-1}^{sy^2} \frac{y^3}{\sqrt{2\pi x^3 (sy^2-x)}} \e^{-\frac{y^2}{2x}} \D x \D s\\
=& \int_{u}^\infty \int_{0}^{1} \frac{y^3}{\sqrt{2\pi (sy^2-1+z)^3 (1-z)}} \e^{-\frac{y^2}{2(sy^2-1+z)}} \D z \D s\\
=& \int_{u}^\infty \int_{0}^{1} \frac{1}{\sqrt{2\pi (s-\frac{1}{y^2}+\frac{z}{y^2})^3 (1-z)}} \e^{-\frac{1}{2\left(s-\frac{1}{y^2}+\frac{z}{y^2}\right)}} \D z \D s.
\end{align*}
As above, we can apply the dominated convergence theorem to deduce
\begin{align*}
\int_{u y^2}^\infty \int_{t-1}^t \frac{y}{\sqrt{2\pi x^3 (t-x)}} \e^{-\frac{y^2}{2x}} \D x \D t
\xrightarrow{y \to \infty}
\int_{u}^\infty\int_0^1 \frac{1}{\sqrt{2\pi s^3 (1-z)}} \e^{-\frac{1}{2s}} \D z \D s
= \int_{u}^\infty \frac{2}{\sqrt{2\pi s^3}} \e^{-\frac{1}{2s}}\D s.
\end{align*}
Combining the two limits with \eqref{4aprf}, we get $$\p\left(\frac{g^y}{y^2} \gne u\,\bigg\abs\, g^y \gne 0\right) \xrightarrow{y \to \infty} \int_{u}^\infty \frac{1}{\sqrt{2\pi s^3}} \e^{-\frac{1}{2s}}\D s$$ proving the first claim. The second claim follows immediately.
\\ \\
\textit{Part (b):} Similar to part (a), it suffices to prove the convergence of $y^2(1-\Gamma^y)$ and $y^2(1-g^y)$.
Let $u \ge 0$ and $y \lne -\sqrt{u}$. Substituting $t= \frac{y^2}{y^2+2z}$ or equivalently $z= -\frac{y^2(t-1)}{2t}$ and applying the dominated convergence theorem, we obtain
\begin{align*}
&\int_0^{1-\frac{u}{y^2}} \frac{y^2}{2\sqrt{t}} \e^{\frac{y^2}{2}} \e^{-\frac{y^2}{2t}} \D t
= \int_{\frac{u}{2}\left(1-\frac{u}{y^2}\right)^{-1}}^\infty  \frac{y^2\sqrt{y^2+2z}}{2\abs y\abs} \e^{-z} \frac{2y^2}{(y^2+2z)^2} \D z\\
=& \int_{0}^\infty \1_{\left\{\frac{u}{2}\le \left(1-\frac{u}{y^2}\right)z \right\}} \left(\frac{y^2}{y^2+2z}\right)^{\frac{3}{2}} \e^{-z} \D z
\to \int_{0}^\infty \1_{\left\{\frac{u}{2}\le z \right\}} \e^{-z} \D z = \e^{-\frac{u}{2}}, \quad y \to -\infty.
\end{align*}
We deduce
\begin{align*}
\p( y^2(1-\Gamma^y) \ge u)
=& \p\left(\Gamma^y \le 1-\frac{u}{y^2}\right)
\stackrel{\eqref{GammaForm}}{=} \frac{\int_0^{1-\frac{u}{y^2}} \frac{1}{\sqrt{t}} \e^{-\frac{y^2}{2t}} \D t}{\int_0^1 \frac{1}{\sqrt{t}} \e^{-\frac{y^2}{2t}} \D t}\\
=&  \frac{\int_0^{1-\frac{u}{y^2}} \frac{y^2}{2\sqrt{t}} \e^{\frac{y^2}{2}} \e^{-\frac{y^2}{2t}} \D t}{\int_0^1 \frac{y^2}{2\sqrt{t}} \e^{\frac{y^2}{2}} \e^{-\frac{y^2}{2t}} \D t}
\to
\e^{-\frac{u}{2}},
\quad y \to -\infty,
\end{align*}
proving the claimed convergence of $y^2(1-\Gamma^y)$. Recalling $\p(g^y \lne \infty)=1$, we similarly get
\begin{align*}
\p(y^2(1-g^y)\le u) 
\stackrel{\hphantom{\eqref{q=1}}}{=}&
1- \p\left(g^y \lne 1-\frac{u}{y^2}\right)
\stackrel{\eqref{gForm}}{=} 1-\frac{\int_0^{1-\frac{u}{y^2}} q^y(t,1) \frac{1}{\sqrt{t}} \e^{-\frac{y^2}{2t}} \D t}{2\int_0^1 \frac{1}{\sqrt{t}} \e^{-\frac{y^2}{2t}} \D t} \\
\stackrel{\eqref{q=1}}{=}& 1-\frac{\int_0^{1-\frac{u}{y^2}} \frac{1}{\sqrt{t}} \e^{-\frac{y^2}{2t}} \D t}{2\int_0^1 \frac{1}{\sqrt{t}} \e^{-\frac{y^2}{2t}} \D t}
\to 1- \frac{1}{2}\e^{-\frac{u}{2}}
= \p(g' \le u), \quad y \to -\infty.
\end{align*}
Now let $u \le 0$ and $y \lne 0$. Substituting $x= \frac{y^2}{y^2+2z}$ (as above) and applying the dominated convergence theorem with majorant $\frac{4z}{\sqrt{2\pi(2z-u)}}\e^{-z}$, we obtain
\begin{align*}
\int_{1-\frac{u}{y^2}}^\infty q^y(t,1) \frac{y^2}{2 \sqrt{t}} \e^{\frac{y^2}{2}} \e^{-\frac{y^2}{2t}}\D t
\stackrel{\eqref{qFormEqn}}{=}& \int_{1-\frac{u}{y^2}}^\infty \int_0^1 \frac{(1-x)\abs y\abs y^2}{2\sqrt{2\pi x^3 (t-x)^3}} \e^{\frac{y^2}{2}} \e^{-\frac{y^2}{2x}} \D x \D t\\
\stackrel{\hphantom{\eqref{qFormEqn}}}{=}& \int_0^1 \frac{(1-x)\abs y\abs y^2}{\sqrt{2\pi x^3 \big(1-\frac{u}{y^2}-x\big)}} \e^{\frac{y^2}{2}} \e^{-\frac{y^2}{2x}} \D x\\
\stackrel{\hphantom{\eqref{qFormEqn}}}{=}& \int_0^\infty \frac{2z\abs y\abs y^2\sqrt{ (y^2+2z)^3}}{(y^2+2z)\sqrt{2\pi y^6\big(1-\frac{u}{y^2}-\frac{y^2}{y^2+2z}\big)}} \e^{-z} \frac{2y^2}{(y^2+2z)^2} \D z\\
\stackrel{\hphantom{\eqref{qFormEqn}}}{=}& \int_0^\infty \frac{4z}{\sqrt{2\pi\big(2z-u \frac{y^2+2z}{y^2}\big)}} \e^{-z} \frac{y^2}{y^2+2z} \D z\\
\stackrel{\hphantom{\eqref{qFormEqn}}}{\to}& \int_0^\infty \frac{4z}{\sqrt{2\pi(2z-u)}} \e^{-z} \D z, \quad y \to -\infty.
\end{align*}
Recalling $\p(g^y\lne \infty)=1$, we can apply~\eqref{gForm} to deduce
\begin{align*}
\p(y^2(1-g^y)\le u)
=& \p\left(g^y \gne 1-\frac{u}{y^2}\right)
\stackrel{\eqref{gForm}}{=}
\frac{\int_{1-\frac{u}{y^2}}^\infty q^y(t,1) \frac{1}{\sqrt{t}} \e^{-\frac{y^2}{2t}}\D t}{2\int_0^1 \frac{1}{\sqrt{t}} \e^{-\frac{y^2}{2t}} \D t}\\
=& \frac{\int_{1-\frac{u}{y^2}}^\infty q^y(t,1) \frac{y^2}{2 \sqrt{t}} \e^{\frac{y^2}{2}} \e^{-\frac{y^2}{2t}}\D t}{2\int_0^1 \frac{y^2}{2\sqrt{t}} \e^{\frac{y^2}{2}} \e^{-\frac{y^2}{2t}} \D t}
\to \frac{\int_0^\infty \frac{4z}{\sqrt{2\pi(2z-u)}} \e^{-z} \D z}{2}
= \p(g' \le u), \quad y \to -\infty,
\end{align*}
proving $y^2(1-g^y) \Rightarrow g'$.
\end{proof}

\end{document}